\newtheorem*{remark}{Remark}
\newtheorem*{claim}{Claim}
\title{The Maximum Principle for progressive optimal stochastic control problems with random jumps\thanks{Submitted to the editors
DATE.
\funding{This work was supported by the Natural Science Foundation of China (11631104, 61573217, 11831010),
the National High-level personnel of special support program, and the Chang Jiang Scholar Program of Chinese
Education Ministry.}}}
\author{Yuanzhuo Song\thanks{ School of Mathematics and Zhongtai Securities Institute for Financial Study, Shandong University, Jinan 250100, China
(\email{201511380@mail.sdu.edu.cn}).}
\and Shanjian Tang \thanks{School of Mathematical Sciences, Fudan University, Shanghai 200433, China (\email{sjtang@fudan.edu.cn}).}
\and Zhen Wu\thanks{Corresponding author. School of Mathematics and Zhongtai Securities Institute for Financial Study, Shandong University, Jinan 250100, China (\email{wuzhen@sdu.edu.cn}
).}
}
\date{}
\begin{document}

\maketitle
\begin{abstract}
  In this paper, we obtain the maximum principle for optimal controls of stochastic systems with jumps by introducing a new method of variation. The control is allowed to enter both diffusion and jump term and the control domain need not to be convex.
\end{abstract}

\begin{keywords}
  Maximum Principle, random jumps, spike variation, adjoint equation
\end{keywords}

\begin{AMS}
  93E20, 60H10
\end{AMS}

\section{Introduction}
Stochastic optimal control problem is an important problem in control theory. Maximum principle, the necessary condition for the optimal control, is one of the central results. A lot of work has been done on this topic, Peng \cite{peng1990general} proved the general maximum principle for forward stochastic control system without jump by using second-order variation equation to overcome the difficulty appeared along with the non-convex control domain and control entering the diffusion term. Situ \cite{situ1991maximum} obtained the maximum principle for forward stochastic control system with jumps, but in his system the jump coefficient doesn't contain the control variable. Tang and Li \cite{Tang1994necessary} proved the maximum principle for forward control system where the control variable is allowed into both diffusion and jump coefficients. There are many results for other stochastic control systems, we refer the reader for Peng \cite{peng1993backward}, Wu \cite{zhen1998maximum}, Shi and Wu \cite{shi2006maximum} for forward-backward system.

 In this paper, we consider optimal control of progressive stochastic differential systems with random jumps, where the integrand of stochastic integrals w.r.t. the compensated Poisson point process  could be progressively measurable instead of predictable as in Tang and Li \cite{Tang1994necessary}. In this new setting, the incorrect estimate (i.e. the third one) in (2.10) of \cite{Tang1994necessary} is not required anymore by considering only those perturbed admissible controls which admit no perturbation to the optimal control at the jumping times of the underlying point process.


The rest of this paper is organized as follows. In section 2, we give some preliminaries about the stochastic integral with respect to jumps. The difference between our model with the model in \cite{Tang1994necessary} is that we need the integrand to be progressive in order to make our variation effective. Our main results are stated in Sections 3, 4 and 5. In these sections, we employ the new spike variation and introduce second order variation equations to get the desired maximum principle, which is the rigorous version in strict mathematical framework. In section 6, we explain the characteristic of our results and show our future research directions. Some results about stochastic differential equation (SDE) with jumps are put in appendix.

\section{Preliminaries}
\label{sec:main}

Let $(\Omega, \mathscr{F}, \{\mathscr{F}_t\}_{t\ge 0}, P)$ be a complete probability space with filtration, and on the probability space,  there is
    a $\mathscr{F}_t$-Brownian motion $\{B_t\}_{t\ge 0}$; and
    a Poisson random measure $N$ on $R_+\times E$ adapted to $\mathscr{F}_t$, where $E$ is a standard measure space with a $\sigma$-field $\mathscr{E}$. The mean measure of $N$ is a measure on $(R_+\times E, \mathscr{B}(R_+)\otimes\mathscr{E})$ which has the form $Leb\times \lambda$, where $Leb$ denotes the Lebesgue measure on $R_+$ and $\lambda$ is a finite measure on $E$. For any $B\in \mathscr{E}$ and $t\in R_+$, since $\lambda(B)<\infty$, we set $\tilde{N}(\omega, [0, t]\times B):=N(\omega, [0, t]\times B)-t\lambda(B)$. It is well known that $\tilde{N}(\omega, [0, t]\times B)$ is a martingale for every $B$.
    We assume that $\{\mathscr{F}_t\}_{t\ge 0}$ is generated by $B, N$, that is
    \[\mathscr{F}_t:=\sigma\left(N([0, s], A), 0\le s\le t, A\in\mathscr{E}\right)\vee\sigma(B_s, 0\le s\le t)\vee\mathscr{N}\]
    where $\mathscr{N}$ denotes the totality of $P$-null sets. Then $\mathscr{F}_t$ satisfies the usual condition.

    Suppose that $M$ is a Euclid space, $\mathscr{B}(M)$ is the Borel $\sigma$-field on $M$. Given $T>0$, a process $X:[0, T]\times\Omega\rightarrow M$
    is called progressive (predictable) if $X$ is $\mathscr{G}/\mathscr{B}(M)$ ($\mathscr{P}/\mathscr{B}(M)$) measurable, where $\mathscr{G}$ ($\mathscr{P}$) is the progressive (predictable) $\sigma$-field on $[0, T]\times\Omega$; a process $X:[0, T]\times\Omega\times E\rightarrow M$ is called $E$-progressive ($E$-predictable) if $X$ is $\mathscr{G}\otimes\mathscr{E}/\mathscr{B}(M)$($\mathscr{P}\otimes\mathscr{E}/\mathscr{B}(M)$) measurable. On contrast to \cite{Tang1994necessary}, the stochastic integral we used is more general, that is the integrand of the stochastic integral in our paper is $E$-progressive rather than $E$-predictable.

    Now we introduce some notations. Given a process $X_t$ with c\`adl\`ag paths, $X_{0-}:=0$ and $\Delta X_t:=X_t-X_{t-},t\ge 0$. Let $\mu$ denote the measure on $\mathscr{F}\otimes\mathscr{B}([0, T])\otimes\mathscr{E}$ generated by $N$ that $\mu(A)=E\int_0^T\int_E I_A N(ds, de)$. For any $\mathscr{F}\otimes\mathscr{B}([0, T])\otimes\mathscr{E}/\mathscr{B}(R)$ measurable integrable process $X$, we set $\mathbb{E}[X]:=\int X d\mu$ and denote by $\mathbb{E}[X|\mathscr{P}\otimes\mathscr{E}]$  the Radon-Nikodym derivatives with respect to $\mathscr{P}\otimes\mathscr{E}$. Note that $\mathbb{E}$ is not an expectation (for $\mu$ is not a probability measure), though it has similar properties to expectation. Then we introduce the definition of stochastic integral of random measure which is more general than that in \cite{Tang1994necessary} based on the theory of stochastic integral of process. We will use the theory of dual predictable projection (also called compensator) and we will not give the definition here. The definition and other details of the theory can be found in \cite{he2018semimartingale}.

    Suppose $H=I_{A\times B}, A\in \mathscr{G}, B\in\mathscr{E}$. We define
    \[\int_0^T\int_E H \tilde{N}(dt, de):=\int_0^T I_A\tilde{N}(dt, B)\]
    Then for any $E$-progressive simple function with the form $H=\sum_{i=1}^n a_iI_{A_i\times B_i},a_i\in R,A_i\in \mathscr{G}, B_i\in\mathscr{E}$, we can define by linear extension.

    For $E$-progressive process $H$ that $E\left[\int_0^T\int_E H^2N(dt, de)\right]<\infty$, there exist a sequence of $E$-progressive simple functions $H_n$ which have the form above that $$\lim_{n\rightarrow \infty}E\left[\int_0^T\int_E (H-H_n)^2N(dt, de)\right]=0$$. \\We can verify that $\{(H_n. \tilde{N})_T\}_{n\ge 1}$ is a cauchy sequence in $L^2$, so we define
    \[\int_0^T\int_E H \tilde{N}(dt, de):=(L^2)\lim_{n\rightarrow \infty}\int_0^T \int_EH_n\tilde{N}(dt, de)\]
    \begin{proposition}
        If $H$ is a positive $E$-progressive process that \[E\left[\int_0^T\int_E HN(dt, de)\right]<\infty,\] then
        \begin{equation}\label{ss}
        \left(\int_0^{\cdot}\int_EHN(ds, de)\right)_t^p=\int_0^t\int_E\mathbb{E}\left[H|\mathscr{P}\otimes\mathscr{E}\right]\lambda(de)ds
        \end{equation}
        where $X^p$ is the dual predictable projection of $X$.
    \end{proposition}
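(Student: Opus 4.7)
The plan is to check that $A_t := \int_0^t\int_E \mathbb{E}[H|\mathscr{P}\otimes\mathscr{E}]\lambda(de)\,ds$ is a predictable increasing process with $E[A_T] < \infty$ and satisfies the duality
\[
E\left[\int_0^T Z_t\,dX_t\right] = E\left[\int_0^T Z_t\,dA_t\right]
\]
for every bounded predictable $Z$, where $X_t := \int_0^t\int_E H\,N(ds,de)$. Uniqueness of the dual predictable projection (from \cite{he2018semimartingale}) then yields $X^p = A$.

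The first step is a book-keeping lemma on the measure $\mu$: for any predictable rectangle $A'\times (s,u]$ with $A'\in\mathscr{F}_s$ and any $B\in\mathscr{E}$,
\[
\mu\bigl((A'\times (s,u])\times B\bigr) = E\bigl[I_{A'}\bigl(\tilde N((s,u],B)+(u-s)\lambda(B)\bigr)\bigr] = P(A')(u-s)\lambda(B),
\]
since $\tilde N(\cdot,B)$ is a martingale. A monotone class argument shows that $\mu$ restricted to $\mathscr{P}\otimes\mathscr{E}$ coincides with $P\otimes Leb\otimes\lambda$ restricted to $\mathscr{P}\otimes\mathscr{E}$. In particular, for every nonnegative $E$-predictable $G$,
\[
E\left[\int_0^T\!\!\int_E G(t,e)\,N(dt,de)\right] = \int G\,d\mu = E\left[\int_0^T\!\!\int_E G(t,e)\,\lambda(de)\,dt\right].
\]

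The second step combines this with the Radon--Nikodym characterization of $\mathbb{E}[\cdot|\mathscr{P}\otimes\mathscr{E}]$. For any bounded predictable $Z$, the integrand $Z_t H(t,e)$ is $E$-progressive with finite $\mu$-integral, so by definition of the conditional expectation with respect to $\mu$ (and since $Z$ is $\mathscr{P}\otimes\mathscr{E}$-measurable),
\[
E\left[\int_0^T Z_t\,dX_t\right] = \int ZH\,d\mu = \int Z\,\mathbb{E}[H|\mathscr{P}\otimes\mathscr{E}]\,d\mu.
\]
The integrand on the right is now $E$-predictable and nonnegative, so the first step applies and rewrites this as $E\bigl[\int_0^T Z_t\,dA_t\bigr]$, establishing the duality.

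It remains to note that $A$ is predictable (Fubini applied to the $E$-predictable density $\mathbb{E}[H|\mathscr{P}\otimes\mathscr{E}]$), nondecreasing, and satisfies $E[A_T] = \int \mathbb{E}[H|\mathscr{P}\otimes\mathscr{E}]\,d\mu = \int H\,d\mu < \infty$ by the defining property of the conditional expectation (take the test function $\equiv 1$) and the hypothesis on $H$. The main point of care, and the only genuine obstacle, is the upgrade from the compensation formula for scalar predictable integrands (which is the content of the martingale property of $\tilde N$) to the same formula for $E$-predictable integrands; this is exactly what the identification $\mu|_{\mathscr{P}\otimes\mathscr{E}} = (P\otimes Leb\otimes\lambda)|_{\mathscr{P}\otimes\mathscr{E}}$ in Step 1 provides, and it is this identification that makes the conditional-expectation trick on the progressive $\sigma$-field work.
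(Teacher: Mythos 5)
Your proof is correct, but it takes a genuinely different route from the paper's. The paper bootstraps on the integrand: it first computes the dual projection for $H=I_{A\times B}$ via the auxiliary measure $\mathbb{E}_B$ generated by $N([0,t]\times B)$, proves a separate claim identifying $\lambda(B)\mathbb{E}_B[I_A|\mathscr{P}]$ with $\int_E\mathbb{E}[I_{A\times B}|\mathscr{P}\otimes\mathscr{E}]\lambda(de)$, then runs a monotone class argument over bounded $E$-progressive $H$ (which requires passing $L^1$-limits through the dual-projection operator), and finally truncates. You instead verify the defining duality $E[\int_0^T Z_t\,dX_t]=E[\int_0^T Z_t\,dA_t]$ for general $H$ in one pass, reducing everything to (i) the identification $\mu|_{\mathscr{P}\otimes\mathscr{E}}=(P\otimes Leb\otimes\lambda)|_{\mathscr{P}\otimes\mathscr{E}}$, an elementary consequence of the martingale property of $\tilde{N}(\cdot,B)$ plus a monotone class argument over predictable rectangles, and (ii) the defining property of the Radon--Nikodym derivative $\mathbb{E}[H|\mathscr{P}\otimes\mathscr{E}]$ tested against the $\mathscr{P}\otimes\mathscr{E}$-measurable function $Z$. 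What your approach buys: the extension machinery lives entirely at the level of the finite measure $\mu$, there is no limit passage through the projection operator, and you do not need the compensator formula for progressive integrands of $N(\cdot,B)$ as a black-box input. What it costs: you lean on the duality characterization and uniqueness of the dual predictable projection from \cite{he2018semimartingale}, and you should add a word on why $A$ admits a nonnegative, right-continuous (hence nondecreasing c\`adl\`ag) version; both points are routine.
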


    \begin{proof}
    If $H=I_{A\times B}, A\in \mathscr{G}, B\in\mathscr{E}$, then
    \[\left(\int_0^{\cdot}\int_EHN(ds, de)\right)_t^p=\left(\int_0^{\cdot} I_AN(ds, B)\right)_t^p=\int_0^t \mathbb{E}_B\left[I_A|\mathscr{P}\right]\lambda(B)ds\]
    where $\mathbb{E}_B$ is the measure on $\mathscr{B}([0, T])\otimes\mathscr{F}$ generated by $N([0, t]\times B)$. Now we need a claim.
    \begin{claim}
    $$\int_E\mathbb{E}\left[I_{A\times B}|\mathscr{P}\otimes\mathscr{E}\right]\lambda(de)=\lambda (B)\mathbb{E}_B\left[I_A|\mathscr{P}\right]$$
    \end{claim}
    \begin{proof}
    It is obvious that both sides of the equation are predictable. Now for any $C\in \mathscr{P}$,
    \begin{equation*}
    \begin{aligned}
    &E\left[\int_0^TI_C\int_E\mathbb{E}\left[I_{A\times B}|\mathscr{P}\otimes\mathscr{E}\right]\lambda(de)dt\right]=E\left[\int_0^T\int_E\mathbb{E}\left[I_CI_{A\times B}|\mathscr{P}\otimes\mathscr{E}\right]N(dt, de)\right]\\
    &=E\left[\int_0^T\int_EI_{A\cap C}I_BN(dt, de)\right]=E\left[\int_0^T\int_EI_{A\cap C}N(dt, B)\right].
    \end{aligned}
    \end{equation*}
    On the other hand,
    \begin{equation*}
    \begin{aligned}
    &E\left[\int_0^TI_C\lambda (B)\mathbb{E}_B\left[I_A|\mathscr{P}\right]dt\right]=E\left[\int_0^T\int_EI_B\mathbb{E}_B\left[I_{A\cap C}|\mathscr{P}\right]\lambda(de)dt\right]\\
    &=E\left[\int_0^T\int_EI_B\mathbb{E}_B\left[I_{A\cap C}|\mathscr{P}\right]N(dt, de)\right]=E\left[\int_0^T\int_EI_{A\cap C}N(dt, B)\right].
    \end{aligned}
    \end{equation*}
    The proof is then complete.
    \end{proof}

    We come back to the proof of proposition. The claim above shows that \cref{ss} is true for functions of the form $I_{A\times B}$, now we define
    \[\mathscr{C}=\{H=I_{A\times B}\mid A\in\mathscr{G}, B\in\mathscr{E}\}\]
    $\mathscr{C}$ is a $\pi$-system that generate $\mathscr{G}\times\mathscr{E}$. Define
    \begin{equation*}
    \begin{aligned}
    \mathscr{H}&=\Bigg\{H\text{ is bounded and $E$-progressive}\mid \\ & \left(\int_0^{\cdot}\int_EHN(ds, de)\right)_t^p=\int_0^t\int_E\mathbb{E}\left[H|\mathscr{P}\otimes\mathscr{E}\right]\lambda(de)ds\Bigg\}.
    \end{aligned}
    \end{equation*}
    Then $\mathscr{C}\in\mathscr{H}$, and by the linear property of dual predictable projection we can verify that $\mathscr{H}$ is a linear space. If $H^n\uparrow H$, and $H$ is bounded, then we have $\left(\int_0^{\cdot}\int_EH^nN(ds, de)\right)_t^p\rightarrow \left(\int_0^{\cdot}\int_EHN(ds, de)\right)_t^p$ for each $t$ in $L^1$ sense and this implies that $H\in\mathscr{H}$. So, by monotone class theorem, we prove that all bounded $E$-progressive process satisfy the result.

    For $E$-progressive $H$ such that $E\left[\int_0^T\int_E HN(dt, de)\right]<\infty$, we set
    $$H^n=HI_{\{|H|\le n\}}\in \mathscr{H}$$
    and take limit to show that $H$ satisfies \cref{ss}.
    \end{proof}

    \begin{proposition}
    If $H$ is $E$-progressive and satisfies
    $E\left[\int_0^T\int_E H^2 N(dt, de)\right]<\infty, $
    then we have
    \begin{equation*}
    \int_0^T\int_E H\tilde{N}(dt, de)=\int_0^T\int_EHN(dt, de)-\left(\int_0^{\cdot}\int_EHN(dt, de)\right)^p_T.
    \end{equation*}
    \end{proposition}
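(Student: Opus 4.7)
The plan is a three-stage approximation argument that mirrors the construction of the stochastic integral in the previous display, using Proposition 2.1 to identify the dual predictable projection of the pathwise $N$-integral.

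First I would verify the identity on the $\pi$-system $\mathscr{C} = \{I_{A \times B} : A \in \mathscr{G},\, B \in \mathscr{E}\}$. For $H = I_{A \times B}$, the left-hand side equals $\int_0^T I_A\,\tilde N(dt,B)$ by definition. The process $\int_0^\cdot I_A\,N(ds,B)$ is a pure-jump process of integrable variation, so it admits a dual predictable projection, and by Proposition 2.1 together with the Claim it equals $\int_0^t \lambda(B)\,\mathbb{E}_B[I_A\mid \mathscr{P}]\,ds$. On the other hand, the stochastic integral of the progressive process $I_A$ against the square-integrable martingale $\tilde N(\cdot,B)$ coincides (by the standard theory of stochastic integration against martingales of integrable variation, as recalled in \cite{he2018semimartingale}) with $\int_0^T I_A\,N(dt,B) - \int_0^T \lambda(B)\,\mathbb{E}_B[I_A\mid\mathscr{P}]\,dt$. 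Combining these, the identity holds on $\mathscr{C}$.

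Next, linearity of stochastic integration, of the pathwise $N$-integral, and of dual predictable projection immediately extend the identity to all $E$-progressive simple functions $H = \sum_{i=1}^n a_i I_{A_i\times B_i}$. This reduces the general case to a limit argument: choose simple $E$-progressive $H_n$ with $E[\int_0^T\!\!\int_E (H-H_n)^2\,N(dt,de)] \to 0$, which exist by the construction preceding Proposition 2.1. For each $n$,
\begin{equation*}
\int_0^T\!\!\int_E H_n\,\tilde N(dt,de) = \int_0^T\!\!\int_E H_n\,N(dt,de) - \Bigl(\int_0^\cdot\!\int_E H_n\,N(ds,de)\Bigr)^p_T.
\end{equation*}
The left-hand side converges in $L^2$ to $\int_0^T\!\int_E H\,\tilde N(dt,de)$ by the definition of the integral. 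For the first term on the right, Cauchy--Schwarz combined with $\mu(\Omega\times[0,T]\times E) = T\lambda(E) < \infty$ gives $E[\int|H_n - H|\,N(dt,de)] \le (T\lambda(E))^{1/2}\bigl(E[\int (H_n-H)^2 N(dt,de)]\bigr)^{1/2} \to 0$, so $\int H_n\,N \to \int H\,N$ in $L^1$.

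The main obstacle, and the point where I expect to have to be careful, is the passage to the limit in the dual predictable projection. The required input is the $L^1$-continuity of the map $X \mapsto X^p_T$ on processes of integrable variation: if $X^n_T \to X_T$ in $L^1$ with $X^n, X$ increasing, then $(X^n)^p_T \to X^p_T$ in $L^1$ (and one can always reduce to the increasing case by splitting $H = H^+ - H^-$). Granted this, taking $L^1$-limits on both sides of the displayed identity for $H_n$ yields the assertion for $H$. If needed, a truncation $H^{(k)} = H\, I_{\{|H|\le k\}}$ followed by a monotone-class argument inside $\mathscr{H}$ from the proof of Proposition 2.1 provides an alternative route, again reducing the limit question to the continuity of the dual predictable projection and of the $N$-integral in $L^1$.
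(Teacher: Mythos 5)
Your proposal is correct and follows essentially the same route as the paper: verify the identity for $H=I_{A\times B}$ directly from the definition of the integral, extend by linearity to simple functions, and pass to the limit using the $L^2$-convergence of the stochastic integrals and the $L^1$-convergence of the $N$-integrals and their compensators (the paper does the limit step for positive $H$ via increasing simple approximations and then splits $H=H^+-H^-$, which is the same reduction you mention). The one point you flag as delicate, $L^1$-continuity of $X\mapsto X^p_T$, is exactly the step the paper also relies on implicitly, and it follows from $E\left[\left|\left(X^n-X\right)^p_T\right|\right]\le E\left[\int_0^T\int_E|H_n-H|\,N(dt,de)\right]$.
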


    \begin{proof}
    If $H=I_{A\times B}, A\in\mathscr{G}, B\in\mathscr{E}$, by the definition of stochastic integral,
    \begin{equation*}
    \begin{aligned}
    \int_0^T\int_EH\tilde{N}(dt, de)&=\int_0^T I_A\tilde{N}(dt, B)=\int_0^T I_AN(dt, B)-\left(\int_0^{\cdot} I_AN(dt, B)\right)^p_T
    \\&=\int_0^T\int_EHN(dt, de)-\left(\int_0^{\cdot}\int_EHN(dt, de)\right)^p_T
    \end{aligned}
    \end{equation*}
    So for any $E$-progressive simple process which has the form $H=\sum_{i=1}^n a_iI_{A_i\times B_i}$ with $a_i\in R,A_i\in \mathscr{G}$, and $B_i\in\mathscr{E}$,  the conclusion is true.

    If $H$ is positive and $E\left[\int_0^T\int_E H^2 N(dt, de)\right]<\infty$, then $E\left[\int_0^T\int_E H N(dt, de)\right]<\infty$ and there exists a sequence of positive increasing simple functions $H_n$ with the above form  such that $$E\left[\int_0^T\int_E (H-H_n)^2 N(dt, de)\right]\rightarrow 0$$ as $n$ goes to infinity, so
    \begin{equation*}
    \begin{aligned}
    &\int_0^T\int_EH\tilde{N}(dt, de)=(L^2)\lim_{n\rightarrow \infty}\int_0^T\int_EH_n\tilde{N}(dt, de)\\&=(L^2)\lim_{n\rightarrow \infty}\left(\int_0^T\int_EH_nN(dt, de)-\left(\int_0^{\cdot}\int_EH_nN(dt, de)\right)^p_T\right)\\
    &=\int_0^T\int_EHN(dt, de)-(L^1 \text{ or } L^2)\lim_{n\rightarrow \infty}\left(\int_0^{\cdot}\int_EH_nN(dt, de)\right)^p_T\\
    &=\int_0^T\int_EHN(dt, de)-\left(\int_0^{\cdot}\int_EHN(dt, de)\right)^p_T.
    \end{aligned}
    \end{equation*}
    If $H$ is not positive, we decompose $H=H^+-H^-$ and get the result.
    \end{proof}

    From the last two propositions, we have
    \begin{proposition}
    If $H$ is $E$-progressive and $E\left[\int_0^T\int_E H^2N(dt, de)\right]<\infty$, then
    \[\int_0^T\int_EH\tilde{N}(dt, de)=\int_0^T\int_EHN(dt, de)-\int_0^T\int_E\mathbb{E}\left[H|\mathscr{P}\otimes\mathscr{E}\right]\lambda(de)dt\].
    \end{proposition}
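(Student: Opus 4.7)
The plan is to combine the two preceding propositions: Proposition 2 expresses $\int_0^T\int_E H\tilde{N}(dt,de)$ as the unadjusted $N$-integral minus the dual predictable projection of that $N$-integral evaluated at $T$, while Proposition 1 identifies the dual predictable projection, in the positive case, as $\int_0^{\cdot}\int_E\mathbb{E}[H\,|\,\mathscr{P}\otimes\mathscr{E}]\lambda(de)dt$. Thus the task essentially reduces to transporting the identification supplied by Proposition 1 from positive $E$-progressive integrands to signed $L^2$ ones, after which substitution into Proposition 2 closes the argument.

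First I would apply Proposition 2 directly to $H$, which is available because $H$ is $E$-progressive with $E\bigl[\int_0^T\int_E H^2 N(dt,de)\bigr]<\infty$, yielding
\[
\int_0^T\int_E H\tilde{N}(dt,de)=\int_0^T\int_E HN(dt,de)-\left(\int_0^{\cdot}\int_E HN(ds,de)\right)^p_T.
\]
Next I would reduce to the positive case via the canonical decomposition $H=H^+-H^-$. The key observation is that the measure $\mu$ defined earlier is finite on $\mathscr{F}\otimes\mathscr{B}([0,T])\otimes\mathscr{E}$, with total mass $T\lambda(E)<\infty$ (since $\lambda$ is finite), so the Cauchy--Schwarz inequality gives $L^2(\mu)\subset L^1(\mu)$. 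In particular $E\bigl[\int_0^T\int_E H^{\pm}N(dt,de)\bigr]\le E\bigl[\int_0^T\int_E |H|N(dt,de)\bigr]<\infty$, which places $H^+$ and $H^-$ within the hypothesis of Proposition 1 and produces
\[
\left(\int_0^{\cdot}\int_E H^{\pm}N(ds,de)\right)^p_t=\int_0^t\int_E\mathbb{E}\bigl[H^{\pm}\,\big|\,\mathscr{P}\otimes\mathscr{E}\bigr]\lambda(de)ds.
\]
Subtracting the two identities and exploiting the linearity of the dual predictable projection together with the linearity of the conditional Radon--Nikodym derivative $\mathbb{E}[\,\cdot\,|\,\mathscr{P}\otimes\mathscr{E}]$, I recover the corresponding formula for $H$ in place of $H^{\pm}$, and substituting it into the Proposition 2 identity gives the desired conclusion.

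The one step requiring genuine care is the verification that $H^+$ and $H^-$ satisfy the $L^1$ hypothesis of Proposition 1; once the finiteness of $\mu$ is recorded, this is automatic, but it is the only nontrivial reduction in the argument. Everything else is a linearity-and-substitution manipulation, so I do not anticipate any serious obstacle.
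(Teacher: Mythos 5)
Your proposal is correct and follows essentially the same route as the paper, which simply derives this proposition by combining the two preceding ones (Proposition 2 for the decomposition of the $\tilde{N}$-integral and Proposition 1 for identifying the dual predictable projection). Your extra care in checking that $H^{\pm}$ satisfy the $L^1$ hypothesis of Proposition 1 via the finiteness of $\mu$ is a detail the paper leaves implicit, but it does not change the argument.
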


    \begin{remark}
    Under the condition of the proposition above, we have
    \begin{equation*}
    \begin{aligned}
    E\left[\int_0^T\int_EHN(dt, de)\right]=E\left[\int_0^T\int_E\mathbb{E}\left[H|\mathscr{P}\otimes\mathscr{E}\right]\lambda(de)dt\right].
    \end{aligned}
    \end{equation*}
    In particular, if $H$ is $E$-predictable, we have the well-known result
    \begin{equation*}
    \begin{aligned}
    E\left[\int_0^T\int_EHN(dt, de)\right]=E\left[\int_0^T\int_EH\lambda(de)dt\right].
    \end{aligned}
    \end{equation*}
    \end{remark}

    Since $N([0, t]\times A)$ is quasi-left-continuous for each $A\in\mathscr{E}$, we have the following two propositions.
    \begin{proposition}
    If $H$ is $E$-progressive and $E\left[\int_0^T\int_E H^2 N(dt, de)\right]<\infty$, then we have
    \[\Delta(H. \tilde{N})_t=\int_E HN(\{t\}, de)\].
    \end{proposition}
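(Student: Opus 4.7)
The plan is to reduce the claim, by means of the preceding proposition, to a jump identification for the raw integral $\int_0^{\cdot}\int_E H N(ds, de)$, and then to read that jump off from the atomic structure of the Poisson random measure $N$.

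First I would apply the preceding proposition to write, for every $t \in [0,T]$,
\[
(H\cdot\tilde N)_t = \int_0^t\int_E H\, N(ds, de) - \int_0^t\int_E \mathbb{E}\bigl[H \mid \mathscr{P}\otimes\mathscr{E}\bigr]\lambda(de)\, ds.
\]
The second term is a Lebesgue integral of an integrable function of $s$, hence absolutely continuous in $t$; in particular it is continuous, so it contributes no jump. Consequently $\Delta (H\cdot\tilde N)_t$ equals the jump at $t$ of the counting-type integral $\int_0^\cdot \int_E H\, N(ds, de)$. This is exactly the point at which the quasi-left-continuity of $N([0,\cdot]\times A)$ alluded to in the excerpt enters: it guarantees (via the previous proposition) that the compensator of the raw integral is a continuous process, so the jumps of $H\cdot\tilde N$ coincide with those of the raw integral.

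Next I would identify the jump of $\int_0^\cdot \int_E H\, N(ds, de)$ at time $t$ with $\int_E H\, N(\{t\}, de)$. Since $N$ is a Poisson random measure on $R_+\times E$ with $\sigma$-finite mean, almost surely it is a purely atomic measure supported on a countable set $\{(T_i, Y_i)\}_{i\ge 1}$ with the $T_i$ pairwise distinct, so that $\int_0^t\int_E H\, N(ds, de) = \sum_{T_i\le t} H(T_i, Y_i)$ is a pure-jump process whose increment at $t$ is either $0$ or the single value $H(t, Y_i)$, which in either case equals $\int_E H(t, e)\, N(\{t\}, de)$.

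The only real obstacle is making this atomic argument rigorous for a general $E$-progressive $H$ in $L^2(N)$, since at intermediate stages of the construction one only knows $H\cdot\tilde N$ up to $L^2$-limits. To handle this I would follow the template of the preceding propositions: verify the identity first for $H = I_{A\times B}$ with $A\in\mathscr{G}$, $B\in\mathscr{E}$ (where both sides reduce by inspection to $I_A(t)\,N(\{t\}, B)$), extend by linearity to $E$-progressive simple functions, pass to bounded $E$-progressive $H$ by the monotone-class theorem exactly as in the proof of Proposition 1, and finally remove boundedness by the truncation $H_n := HI_{\{|H|\le n\}}$ together with the $L^2$ isometry of the stochastic integral, which gives uniform-in-$t$ convergence of the cadlag paths of $H_n\cdot\tilde N$ to those of $H\cdot\tilde N$ along a subsequence and hence convergence of the jumps.
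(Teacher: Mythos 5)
Your proposal is correct, but your primary route differs from the paper's. The paper proves the identity directly at the level of jumps by the same approximation scheme used throughout Section 2: it checks $\Delta(\int_0^{\cdot}I_A\tilde N(ds,B))_t=I_AN(\{t\},B)$ for $H=I_{A\times B}$, extends by linearity to simple functions, and then passes to the $L^2(N)$-limit for positive (hence general) $H$, taking for granted that jumps converge along the approximating sequence. You instead first invoke the decomposition $H\cdot\tilde N=H\cdot N-(H\cdot N)^p$ together with the absolute continuity of the compensator $\int_0^t\int_E\mathbb{E}[H\mid\mathscr{P}\otimes\mathscr{E}]\lambda(de)\,ds$ to reduce everything to the raw integral $H\cdot N$, whose jump at $t$ is read off pathwise from the fact that $N$ restricted to $[0,T]\times E$ is a.s.\ a finite atomic measure with distinct time coordinates. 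This is a cleaner and more conceptual argument: once the preceding propositions are in hand, the jump identification is essentially pathwise and requires no further limiting procedure, whereas the paper's proof silently uses that $L^2$-convergence of the integrals implies convergence of the jumps at a fixed $t$ (which needs, e.g., Doob's maximal inequality and a subsequence, a point you address explicitly). The only caveat is that the paper states the decomposition proposition only at the terminal time $T$, so to use it as an identity between indistinguishable processes you need the observation, which you make, that the same proof works for every $t$ and both sides are c\`adl\`ag; your closing truncation/monotone-class paragraph then duplicates the paper's scheme and is not strictly needed once the compensator route is accepted, but it does no harm.
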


    \begin{proof}
    If $H=I_{A\times B}, A\in\mathscr{G}, B\in\mathscr{E}$, then
    \begin{equation*}
    \begin{aligned}
    \Delta \left(\int_0^{\cdot}\int_EH\tilde{N}(ds, de)\right)_t\!=\!\Delta \left(\int_0^{\cdot}I_A\tilde{N}(ds, B)\right)_t\!=\!I_AN(\{t\}, B)\!=\!\int_EI_{A\times B}N(\{t\}, de)
    \end{aligned}
    \end{equation*}

    So for any $E$-progressive simple functions with the form $H=\sum_{i=1}^n a_iI_{A_i\times B_i},a_i\in R,A_i\in \mathscr{G}, B_i\in\mathscr{E}$, the conclusion is true.

    Then for any positive $H$ that $E\left[\int_0^T\int_E H^2 N(dt, de)\right]<\infty$, there exists a sequence of positive increasing simple functions $H_n$ that $E\left[\int_0^T\int_E (H-H_n)^2 N(dt, de)\right]\rightarrow 0$ as $n$ goes to infinity, so
    \begin{equation*}
    \begin{aligned}
    \Delta \left(\int_0^{\cdot}\int_EH\tilde{N}(ds, de)\right)_t&=\lim_{n\rightarrow\infty}\Delta \left(\int_0^{\cdot}H_n\tilde{N}(ds, B)\right)_t=\lim_{n\rightarrow\infty}\int_E H_nN(\{t\}, de)\\&=\int_E HN(\{t\}, de)
    \end{aligned}
    \end{equation*}
    \end{proof}
    \begin{proposition}
    If $H$ is $E$-progressive and $E\left[\int_0^T\int_E H^2 N(dt, de)\right]<\infty$, then we have
    \begin{equation*}
    [H. \tilde{N}, H. \tilde{N}]_t=\int_0^t\int_E H^2N(ds, de)
    \end{equation*}
    \end{proposition}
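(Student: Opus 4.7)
The plan is to identify $M := H . \tilde{N}$ as a purely discontinuous square-integrable martingale and then combine the general identity $[M, M]_t = \sum_{0 < s \le t} (\Delta M_s)^2$ for such martingales with the explicit jump formula $\Delta(H.\tilde{N})_s = \int_E H\, N(\{s\}, de)$ furnished by the preceding proposition.

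First I would verify that $H . \tilde{N}$ is purely discontinuous. For indicator integrands $H = I_{A \times B}$ the integral $\int_0^{\cdot} I_A \tilde{N}(ds, B)$ is a compensated Poisson martingale, hence manifestly purely discontinuous; linearity propagates this to all $E$-progressive simple integrands, and for a general $E$-progressive $H$ with $E[\int_0^T \int_E H^2 N(dt, de)] < \infty$ one passes to the defining $L^2$-limit. Since the space of purely discontinuous square-integrable martingales is closed in $L^2$ (a standard fact from the general semimartingale theory, cf.\ \cite{he2018semimartingale}), the limit $H . \tilde{N}$ is again purely discontinuous.

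Next I would square the jump formula. Because $\lambda(E) < \infty$ and $N$ has intensity $\mathrm{Leb} \otimes \lambda$, almost surely at each time $s$ the measure $N(\{s\}, \cdot)$ is either zero or a single Dirac mass at some mark $e_s \in E$; in particular it concentrates on at most one point. Hence $(\Delta(H.\tilde{N})_s)^2 = \bigl(\int_E H\, N(\{s\}, de)\bigr)^2 = \int_E H^2\, N(\{s\}, de)$. Summing over $0 < s \le t$ and invoking the sum-of-squared-jumps formula then yields $[H.\tilde{N}, H.\tilde{N}]_t = \sum_{0 < s \le t} \int_E H^2\, N(\{s\}, de) = \int_0^t \int_E H^2\, N(ds, de)$, which is the asserted identity.

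The main obstacle is the pure-discontinuity step, which is where the difference between $E$-progressive and $E$-predictable integrands might \emph{a priori} matter; in fact it does not, because the simple-function approximation used in Section~2 preserves pure discontinuity along the $L^2$-limit. An equivalent but more pedestrian route would be to establish the identity first on simple integrands supported on disjoint marks $B_i$ (a direct bilinear computation, since $N(\cdot, B_i)$ for disjoint $B_i$ have no common jumps a.s.) and then pass to the limit using $E[[(H - H_n).\tilde{N}, (H - H_n).\tilde{N}]_T] = E[((H - H_n).\tilde{N})_T^2] \to 0$ together with the $L^1$-convergence of $\int_0^T \int_E H_n^2\, N(ds, de)$, which avoids any appeal to general martingale decomposition results.
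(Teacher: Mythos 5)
Your proposal is correct, but it takes a genuinely different route from the paper. The paper's proof of this proposition is literally ``the same as above'': verify the identity for indicators $I_{A\times B}$, extend by linearity to $E$-progressive simple functions, and pass to the limit along an increasing simple approximation in $L^2$ --- i.e.\ exactly the ``more pedestrian route'' you sketch at the end. Your main argument instead invokes the structure theory of square-integrable martingales: $H.\tilde N$ is purely discontinuous (finite-variation martingales are purely discontinuous for the indicator case, and the purely discontinuous ones form a closed subspace of $\mathcal H^2$, so the property survives the defining $L^2$-limit), whence $[M,M]_t=\sum_{s\le t}(\Delta M_s)^2$, and the preceding proposition's jump formula together with the fact that $N(\{s\},\cdot)$ is a.s.\ either zero or a single Dirac mass gives $(\Delta M_s)^2=\int_E H^2 N(\{s\},de)$. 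This is a clean and valid derivation; the single-atom squaring trick is in fact the same observation the paper uses later when computing $\sum_{t\le T}\Delta P_t(\Delta\hat X_t)^2$. What your route buys is that it reuses the already-proved jump formula and avoids redoing the approximation argument, at the cost of importing two general facts from \cite{he2018semimartingale} (closedness of the purely discontinuous subspace, and the sum-of-squared-jumps formula); the paper's route is more self-contained and uniform across all five propositions of Section~2. One small caution: for merely progressive $A$ the process $\int_0^\cdot I_A\tilde N(ds,B)$ is not the ``naively'' compensated integral $\int I_A N(ds,B)-\int I_A\lambda(B)\,ds$ but rather $\int I_A N(ds,B)$ minus its dual predictable projection (the paper's second proposition); it is still a finite-variation martingale, hence purely discontinuous, so your conclusion stands, but the phrase ``compensated Poisson martingale'' is slightly loose in this progressive setting.
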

    \begin{proof}
    The proof is the same as above.
    \end{proof}
\section{Statement of the Problem}
\label{sec:alg}

Given time duration $T>0$, let $\{T_n\}_{n\ge 1}$ be the jump time of $N([0, t]\times E)$,  $T_n:=\inf\left\{t\mid N([0, t]\times E)\ge n\right\}$, then $\{T_n\}_{n\ge 1}$ is a sequence of stopping times that strictly increasing. Let $U$ be a nonempty subset of $R$. We define the admissible control set
\begin{equation}
\begin{aligned}
    U_{ad}\!=\!\Bigg\{u\mid &u \text{ is progressive, taking values in $U$, }\sup_{0\le t\le T}E\left[|u_t|^p\right]\!<\!\infty \text{ for any }p\!>\!1,\\ & \text{and } E\int_0^T|u_t|^2N(dt,E)<\infty\Bigg\}
\end{aligned}
\end{equation}
    For any admissible control $u\in U_{ad}$ and initial state $x_0\in R$, we consider the following progressive stochastic system with jumps:
    \begin{equation}\label{equation}
    X_t=x_0+\int_0^t b(s, X_s, u_s)ds+\int_0^t \sigma(s, X_s, u_s)dB_s+\int_0^t \int_E c(s, X_{s-}, u_s, e)\tilde{N}(ds, de)
    \end{equation}
    along with the cost functional:
    \begin{equation}
    J(u)=E\left[\int_0^T f(t, X_t, u_t)dt+g(X_T)\right]
    \end{equation}
    where $b:\Omega\times [0, T]\times R \times R\rightarrow R$, $\sigma:\Omega\times [0, T]\times R \times R\rightarrow R$, $c:\Omega\times [0, T]\times R \times R\times E\rightarrow R$, $f:\Omega\times [0, T]\times R \times R\rightarrow R$, $g:\Omega\times R\rightarrow R$. The optimal control is to find an element $u\in U_{ad}$ such that
     \[J(u)=\inf_{v\in U_{ad}}J(v).\]
We aim at finding necessary conditions for an optimal control in $U_{ad}$.
    We need the following assumption.\\
    \textbf{Assumption H:}\\
    \begin{itemize}
    \item $b, \sigma, f$ is $\mathscr{G}\otimes \mathscr{B}(R)\otimes \mathscr{B}(R)/\mathscr{B}(R)$ measurable, $c$ is $\mathscr{G}\otimes\mathscr{E}\otimes \mathscr{B}(R)\otimes \mathscr{B}(R)/\mathscr{B}(R)$ measurable, $g$ is $\mathscr{F}_T\otimes \mathscr{B}(R)/\mathscr{B}(R)$ measurable.
    \item $b, \sigma, c$ are twice continuously differentiable about $x$ with bounded first and second order derivatives and there is a constant $C$ such that $|(b,\sigma,c)(t,x,u)|\le C(1+|x|+|u|)$.
    \item $f,g$ are twice continuously differentiable about $x$ with bounded second order derivatives and there is a constant $C$ such that $|f_x(t,x,u)|\le C(1+|x|+|u|)$, $|f(t,x,u)|\le C(1+|x|^2+|u|^2)$  and $|g_x(x)|\le C(1+|x|)$, $|g(x)|\le C(1+|x|^2)$.
    \item $E\int_0^T |b(t, \omega, 0, 0)|^2dt<\infty$, $E\int_0^T |\sigma(t, \omega, 0, 0)|^2dt<\infty$, \\
        $E\int_0^T\int_E |c(t, \omega, e, 0, 0)|^2N(ds, de)<\infty$.
    \end{itemize}
    Under the assumption,  we know that there exists a unique solution of \cref{equation} for any admissible control from \cref{eusde} in appendix.

\section{Variation}
\label{sec:experiments}
Since $U$ is not necessarily convex, we employ spike variations. Suppose $u\in U_{ad}$ is the optimal control, for any $\bar{t}\in [0, T]$, the spike variation of $u$ is defined as follow:
    \begin{equation}
        u^{\epsilon}=\left\{
         \begin{aligned}
         &v,\ \text{if } (s, \omega)\in {\mathcal {O}}:=\rrbracket\bar{t}, \bar{t}+\epsilon\rrbracket\backslash \displaystyle\bigcup_{n=1}^{\infty} \llbracket T_{n}\rrbracket\\
         &u,\ \text{other wise}.
         \end{aligned}
        \right.
    \end{equation}
    where $\llbracket T_{n}\rrbracket:=\left\{(\omega, t)\in \Omega\times[0, T]\mid T_n(\omega)=t\right\}$ is the graph of $T_n$, $v$ is a bounded $\mathscr{F}_{\bar{t}}$ measurable function that takes values in $U$. Since $T_n$ is a stopping time, $\llbracket T_{n}\rrbracket$ is a progressive set. Therefore, the spike variation $u^{\epsilon}$ is progressive and it is easy to show that $u^{\epsilon}$ is in $U_{ad}$.
    \begin{figure}[htbp]\label{fvariation}
        \centering
        \includegraphics{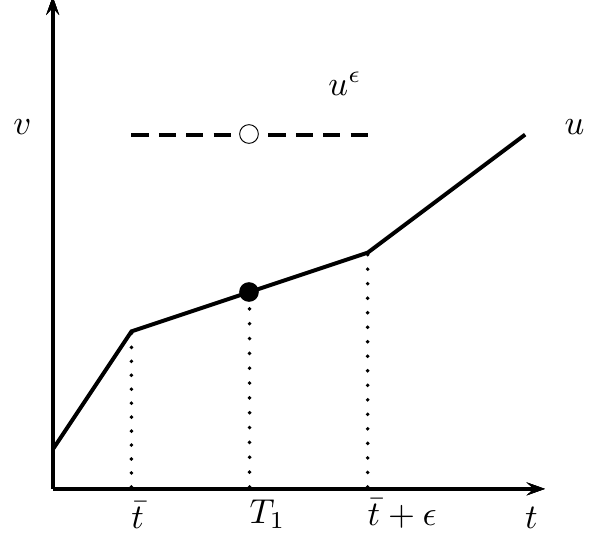}
        \caption{Variation}
    \end{figure}

    The method of variation is showed in \cref{fvariation}. Fix $\omega$, we consider one path of $u^{\epsilon}$ and $u$. The difference between the new method and the traditional method is that if there are jumps in $(t,t+\epsilon]$, for example, as the figure shows that $T_1(\omega)$ is in $(t,t+\epsilon]$, then the value of $u^{\epsilon}$ at $T_1(\omega)$ is equal to $u$ rather than $v$.
    \begin{remark}
    As we know, $T_n$ is not a predictable time, so $\llbracket T_{n}\rrbracket$ is not predictable which means that $u^{\epsilon}$ is not predictable, that's the reason why we need the integrand of the stochastic integral to be progressive. In fact, $T_n$ are totally unpredictable times.
    \end{remark}
    We denote by $X$ the trajectory of $u$, and by $X^{\epsilon}$  the trajectory of $u^{\epsilon}$. By the estimate of SDE and notice that $(Leb\times P)(\llbracket T_{n}\rrbracket)=0$, we can get that:
    \begin{equation*}\small\label{lp estimate for X}
    \begin{aligned}
        &E\left[\sup_{0\le t\le T}\left|X^{\epsilon}_t-X_t\right|^p\right]\le CE\bigg[\left(\int_0^T \left|b(t, X_t, u^{\epsilon}_t)-b(t, X_t, u_t)\right|dt\right)^p\\&+\left(\int_0^T \left|\sigma(t, X_t, u^{\epsilon}_t)-\sigma(t, X_t, u_t)\right|^2dB_t\right)^{p\over 2}\\&+\left(\int_0^T\int_E \left|c(t, X_{t-}, u^{\epsilon}_t, e)-c(t, X_{t-}, u_t, e)\right|^2N(dt, de)\right)^{p\over 2}\bigg]\\ &\le
        CE\bigg[\left(\int_t^{t+\epsilon} \left|u-v\right|dt\right)^p+\left(\int_t^{t+\epsilon} \left|u-v\right|^2dt\right)^{p\over 2}+\left(\int_0^T I_{{\mathcal O}} \left|u-v\right|^2N(dt, E)\right)^{p\over 2}\bigg]\\
    \end{aligned}
    \end{equation*}
    Since there is no jump on ${\mathcal O}$, we have:
    \begin{equation}\label{mest}
    \begin{aligned}
    E\left[\sup_{0\le t\le T}\left|X^{\epsilon}_t-X_t\right|^p\right]=O(\epsilon^p)+O(\epsilon^{p\over 2})
    \end{aligned}
    \end{equation}
    That means the jump term does not influence the order of variation. In fact, if we do not subtract the jump term in variation, $E\left[\left(\int_t^{t+\epsilon} \left|u-v\right|^2N(dt, E)\right)^{p\over 2}\right]$ is always of order $O(\epsilon)$ no matter how large $p$ is. Thanks to this, we can use the method in \cite{peng1990general} to get the desired conclusion. Then we introduce the variation equations:
    \begin{equation}\label{var1}
    \begin{aligned}
    \hat{X}_t&=\int_0^t b_x(s, X_s, u_s)\hat{X}_s+\delta bds+\int_0^t \sigma_x(s, X_s, u_s)\hat{X}_s+\delta\sigma dB_s\\&+\int_0^t \int_E c_x(s, X_{s-}, u_s, e)\hat{X}_{s-}\tilde{N}(ds, de)
    \end{aligned}
    \end{equation}
    and
    \begin{equation}\label{var2}
    \begin{aligned}
    \hat{Y}_t&=\int_0^t b_x(s, X_s, u_s)\hat{Y}_s+\frac{1}{2} b_{xx}(s, X_s, u_s)\hat{X}_s^2ds\\&+\int_0^t \sigma_x(s, X_s, u_s)\hat{Y}_s+ \frac{1}{2} \sigma_{xx}(s, X_s, u_s)\hat{X}_s^2 +\delta\sigma_x\hat{X}_sdB_s\\&+\int_0^t \int_E c_x(s, X_{s-}, u_s, e)\hat{Y}_{s-}+\frac{1}{2} c_{xx}(s, X_{s-}, u_s, e)\hat{X}_{s-}^2\tilde{N}(ds, de)
    \end{aligned}
    \end{equation}
    where $\delta \phi=\phi(s, X_s, u^{\epsilon}_s)-\phi(s, X_s, u_s), \phi=b, \sigma$.
    $\delta\phi_x=\phi_x(s, X_s, u^{\epsilon}_s)-\phi(s, X_s, u_s), \phi=b, \sigma$.

    It is easy to show that \cref{var1,var2} have unique solution. We have some basic estimates about $\hat{X}$ and $\hat{Y}$.
    \begin{lemma}
    For $p\ge 2$, we have the following estimate:
    \begin{equation}
        \left\{
        \begin{aligned}
        &E\left[\sup_{0\le t\le T}|\hat{X}_t|^p\right]\le C\epsilon^{p\over 2}\\
        &E\left[\sup_{0\le t\le T}|\hat{Y}_t|^p\right]\le C\epsilon^p.
        \end{aligned}
        \right.
    \end{equation}
    \end{lemma}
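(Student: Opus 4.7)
The plan is to treat \cref{var1,var2} as linear SDEs with jumps, whose Lipschitz coefficients ($b_x,\sigma_x,c_x$) are bounded by assumption, and reduce each estimate to controlling the ``free'' (forcing) terms. The standard machinery — Burkholder--Davis--Gundy, Hölder/Jensen, the moment estimate for SDEs with jumps from the appendix, and Gronwall — will then deliver the two bounds, provided we exploit two structural facts: (i) the forcing terms in \cref{var1} are supported on $\mathcal{O}\subset\rrbracket\bar t,\bar t+\epsilon\rrbracket$ and contain \emph{no jump term} (thanks to the modified spike variation), and (ii) the forcing terms in \cref{var2} are already quadratic in $\hat X$, so the bound on $\hat X$ feeds into the bound on $\hat Y$.

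First I would handle $\hat X$. Applying the standard $L^p$ estimate for SDEs with jumps (the one cited from \cref{eusde}) to \cref{var1} gives
\[
E\sup_{t\le T}|\hat X_t|^p\le C\,E\!\left[\Big(\int_0^T|\delta b|\,ds\Big)^p+\Big(\int_0^T|\delta\sigma|^2 ds\Big)^{p/2}\right]+C\int_0^T E\sup_{r\le s}|\hat X_r|^p\,ds,
\]
because the jump coefficient in \cref{var1} is linear in $\hat X_{s-}$ with no forcing term. Since $\delta b,\delta\sigma$ vanish outside $\mathcal{O}$ and are bounded by $C(1+|X_s|+|u_s|+|v|)$, Jensen's inequality yields
\[
E\Big(\int_\mathcal{O}|\delta b|\,ds\Big)^p\le C\epsilon^{p-1}\!\int_{\bar t}^{\bar t+\epsilon}\!\!E\bigl(1+|X_s|^p+|u_s|^p+|v|^p\bigr)ds=O(\epsilon^p),
\]
and similarly $E(\int|\delta\sigma|^2ds)^{p/2}=O(\epsilon^{p/2})$; here the uniform $L^p$-boundedness of $u$ and of $\sup_t|X_t|$ (from admissibility and Assumption H) is used. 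Gronwall then gives the first estimate.

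Next I would handle $\hat Y$. Again by the SDE estimate applied to \cref{var2}, it suffices to bound the forcing terms in $L^p$:
\begin{equation*}
\begin{aligned}
&E\Big(\!\int_0^T\!\! |b_{xx}|\hat X_s^2\,ds\Big)^p+E\Big(\!\int_0^T\!\!|\sigma_{xx}|^2\hat X_s^4+|\delta\sigma_x|^2\hat X_s^2\,ds\Big)^{p/2}\\
&\quad+E\Big(\!\int_0^T\!\!\!\int_E|c_{xx}|^2\hat X_{s-}^4\,N(ds,de)\Big)^{p/2}.
\end{aligned}
\end{equation*}
The first two quadratic-in-$\hat X$ contributions are bounded, via Jensen, by $CE\sup_t|\hat X_t|^{2p}\le C\epsilon^p$ by the just-proved estimate applied with exponent $2p$ (which Assumption H and admissibility permit). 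For the mixed term $\delta\sigma_x\hat X$, use that $\delta\sigma_x$ is bounded and supported on $\mathcal{O}$:
\[
E\Big(\!\int_{\bar t}^{\bar t+\epsilon}\!|\delta\sigma_x|^2|\hat X_s|^2ds\Big)^{p/2}\le C\epsilon^{p/2}\,E\sup_t|\hat X_t|^p\le C\epsilon^{p/2}\!\cdot\!\epsilon^{p/2}=C\epsilon^p,
\]
which is the ``double smallness'' that makes the scheme work. For the jump forcing term, BDG and Cauchy--Schwarz give a bound by $(E\sup_t|\hat X_t|^{4p})^{1/2}(E\,N([0,T]\times E)^p)^{1/2}$, and $N([0,T]\times E)$ is a Poisson variable with finite intensity, so the second factor is finite and the first is $C\epsilon^p$. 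Gronwall then closes the estimate.

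The main obstacle is the careful accounting for the $\delta\sigma_x\hat X$ term in the $\hat Y$ equation: one must exploit \emph{both} the $\epsilon$-smallness of the support $\mathcal{O}$ and the $\epsilon^{p/2}$-smallness of $\hat X$ to produce the desired $\epsilon^p$ rate; treating either factor crudely would give only $\epsilon^{p/2}$. Everything else is routine SDE-with-jumps bookkeeping, and the a priori $L^p$-moments needed (of $X$, $u$, and the jump counting process) are already supplied by Assumption H and the definition of $U_{ad}$.
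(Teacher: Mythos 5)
Your proposal is correct and follows essentially the same route as the paper: apply the $L^p$ estimate for the linear SDEs with jumps to reduce everything to the forcing terms, get $O(\epsilon^p)+O(\epsilon^{p/2})$ for $\hat X$ from the $\epsilon$-length support of $\delta b,\delta\sigma$, and then feed $E\sup_t|\hat X_t|^{2p}\le C\epsilon^p$ into the quadratic forcing terms of the $\hat Y$ equation, handling the mixed term $\delta\sigma_x\hat X$ by combining the smallness of $\mathcal O$ with the smallness of $\hat X$ and the jump term by Cauchy--Schwarz against moments of $N([0,T]\times E)$. Your write-up is, if anything, slightly more explicit than the paper's about the Jensen and Cauchy--Schwarz steps, but there is no substantive difference in method.
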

    \begin{proof}
    By the elementary $L^p$ estimate, for $\hat{X}$ we have:
    \begin{equation*}\small
        \begin{aligned}
        &E\left[\sup_{0\le t\le T}|\hat{X}_t|^p\right]\le CE\left[\left(\int_0^T|\delta b|dt\right)^p\right]+CE\left[\left(\int_0^T|\delta \sigma|^2dt\right)^{p\over 2}\right]\\
        &\le CE\left[\left(\int_0^T|u^{\epsilon}_t-u_t|dt\right)^p\right]+CE\left[\left(\int_0^T| u^{\epsilon}_t-u_t|^2dt\right)^{p\over 2}\right]=O(\epsilon^p)+O(\epsilon^{p\over 2})
        \end{aligned}
    \end{equation*}
    for $\hat{Y}$, notice the boundness of $b_{xx}, \sigma_{xx}, c_{xx}$ , we have:
        \begin{align*}\small
        &E\left[\sup_{0\le t\le T}|\hat{Y}_t|^p\right]\le CE\left[\left(\int_0^T| \frac{1}{2}b_{xx}(s, X_s, u_s)\hat{X}^2_s|dt\right)^p\right]\\&+CE\left[\left(\int_0^T| \frac{1}{2} \sigma_{xx}(s, X_s, u_s)\hat{X}^2_s +\delta\sigma_x\hat{X}_s|^2dt\right)^{p\over 2}\right]\\&+CE\left[\left(\int_0^T\int_E|\frac{1}{2} c_{xx}(s, X_{s-}, u_s, e)\hat{X}^2_{s-}|^2N(dt, de)\right)^{p\over 2}\right]\\
        &\le CE\left[\sup_{0\le t\le T}|\hat{X}_t|^{2p}\right]+CE\left[\sup_{0\le t\le T}|\hat{X}_t|^{p}\left(\int_0^T|\delta\sigma_x|^2dt\right)^{p\over 2}\right]\\
        &+CE\left[\sup_{0\le t\le T}|\hat{X}_t|^{2p}N([0, T]\times E)\right]\\
        &=O(\epsilon^p)
        \end{align*}
    \end{proof}
    \begin{lemma}\label{festimate}
    \begin{equation}\small
    \lim_{\epsilon\rightarrow 0}\frac{1}{\epsilon^2}E\left[\sup_{0\le t\le T}|X^{\epsilon}_t-X_t-\hat{X}_t-\hat{Y}_t|^2\right]=0
    \end{equation}
    \end{lemma}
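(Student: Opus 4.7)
The plan is to set $Z^{\epsilon}_t:=X^{\epsilon}_t-X_t-\hat{X}_t-\hat{Y}_t$ and derive an SDE for $Z^{\epsilon}$ by subtracting \cref{equation,var1,var2} from the equation satisfied by $X^{\epsilon}$. Taylor-expanding each coefficient $\phi\in\{b,\sigma,c\}$ in the $x$-variable around $X_s$ to second order with integral remainder, and substituting $X^{\epsilon}_s-X_s=\hat{X}_s+\hat{Y}_s+Z^{\epsilon}_s$, I recast the result as a linear SDE for $Z^{\epsilon}$ with the bounded coefficients $b_x(s,X_s,u_s),\sigma_x(s,X_s,u_s),c_x(s,X_{s-},u_s,e)$, driven by an inhomogeneous forcing $I^{\epsilon}_t$. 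By the standard linear $L^2$ estimate for SDEs with jumps (\cref{eusde} of the appendix), it suffices to show $E[\sup_{0\le t\le T}|I^{\epsilon}_t|^2]=o(\epsilon^2)$.

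Next I split the drift and diffusion parts of $I^{\epsilon}$ according to $\mathcal{O}$ versus $\mathcal{O}^c$. Off $\mathcal{O}$, where $u^{\epsilon}=u$, the coefficient differences are pure $x$-differences; after subtracting the $\tfrac12\phi_{xx}(s,X_s,u_s)\hat{X}_s^2$ carried by \cref{var2}, the remainder is $\bigl(A^{\epsilon}_\phi(s)-\tfrac12\phi_{xx}(s,X_s,u_s)\bigr)\hat{X}_s^2$ with $A^{\epsilon}_\phi(s):=\int_0^1(1-\theta)\phi_{xx}(s,X_s+\theta(X^{\epsilon}_s-X_s),u_s)\,d\theta$, plus quadratic cross products in $(\hat{X},\hat{Y},Z^{\epsilon})$. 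The first factor tends to $0$ in $L^p(P\otimes ds)$ as $\epsilon\to 0$ by \cref{mest}, continuity of $\phi_{xx}$ and dominated convergence; combined with $E\sup|\hat{X}|^8=O(\epsilon^4)$ and Cauchy--Schwarz this gives $o(\epsilon^2)$, while the cross products are controlled by the previous lemma's bounds $E\sup|\hat{X}|^p=O(\epsilon^{p/2})$, $E\sup|\hat{Y}|^p=O(\epsilon^p)$, with $Z^{\epsilon}$-terms absorbed by Gronwall.

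On $\mathcal{O}$ the variation equations carry exactly $\delta b,\delta\sigma,\delta\sigma_x\hat{X}_s$, so the residual forcing consists only of $\delta b_x\cdot(\hat{X}+\hat{Y}+Z^{\epsilon})$, $\delta\sigma_x\cdot(\hat{Y}+Z^{\epsilon})$ and terms quadratic in $(\hat{X},\hat{Y},Z^{\epsilon})$. Since $|\mathcal{O}(\omega)|\le\epsilon$ in Lebesgue measure for every $\omega$, the drift contribution is bounded in $L^2$ by $C\epsilon\,\|\sup_s|\hat{X}_s|\|_{L^2}=O(\epsilon^{3/2})$ and the diffusion by It\^o isometry by $C(\epsilon\cdot E\sup|\hat{Y}|^2)^{1/2}=O(\epsilon^{3/2})$; squaring both gives $O(\epsilon^3)=o(\epsilon^2)$.

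The main obstacle, and the place where the progressive spike variation becomes essential, is the jump part of $I^{\epsilon}$. The decisive observation is that $\int_0^T\int_E I_{\mathcal{O}}(s)\,H(s,e)\,\tilde{N}(ds,de)=0$ almost surely for every integrable $\mathscr{G}\otimes\mathscr{E}$-measurable $H$: by construction $N(\mathcal{O}\times E)=0$, so the $N$-part of the integral vanishes, and since $\mu(\mathcal{O}\times E)=0$ one has $\mathbb{E}[I_{\mathcal{O}}H\mid\mathscr{P}\otimes\mathscr{E}]=0$ $\mu$-almost everywhere, so the compensator $\int_0^T\int_E\mathbb{E}[I_{\mathcal{O}}H\mid\mathscr{P}\otimes\mathscr{E}]\lambda(de)\,ds$ also vanishes almost surely. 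Consequently every $\delta c$ and $\delta c_x$ contribution to the jump forcing---each supported in $\mathcal{O}$---drops out of the $\tilde{N}$-integral identically, and the surviving part $\bigl(A^{\epsilon}_c(s,e)-\tfrac12 c_{xx}(s,X_{s-},u_s,e)\bigr)\hat{X}_{s-}^2$ together with the quadratic cross products in $(\hat{X}_{s-},\hat{Y}_{s-},Z^{\epsilon}_{s-})$ is handled by Burkholder--Davis--Gundy exactly as in the Brownian case. Without the progressive framework, the $\delta c$ terms on $\mathcal{O}$ would not cancel and would force an $O(\epsilon)$ contribution analogous to the problematic third estimate of (2.10) in \cite{Tang1994necessary}.
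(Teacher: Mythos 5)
Your proposal is correct and follows essentially the same route as the paper: write the equation satisfied by the error $X^{\epsilon}-(X+\hat{X}+\hat{Y})$ via second-order Taylor expansion of $b,\sigma,c$ around $X_s$, show the residual forcing terms ($\Lambda,G,F$ in the paper's notation) are $o(\epsilon^2)$ in $L^2$ using the moment bounds on $\hat{X},\hat{Y}$, and conclude by the $L^p$ stability estimate. Your explicit observation that every $\delta c$ and $\delta c_x$ term vanishes under the $\tilde{N}$-integral because $N(\mathcal{O}\times E)=0$ and the compensator of $I_{\mathcal{O}}H$ is zero is exactly the mechanism the paper uses implicitly when it drops $\delta c$ and $\delta c_x\hat{X}_{s-}$ from $F$, so you have in fact justified a step the paper leaves unexplained.
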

    \begin{proof}
    First find the equation that $X_t+\hat{X}_t+\hat{Y}_t$ satisfies.
    \begin{equation*}\small
    \begin{aligned}
        X_t&+\hat{X}_t+\hat{Y}_t=x_0+\int_0^t  b(s, X_s, u_s)+b_x(s, X_s, u_s)\hat{X}_s+b_x(s, X_s, u_s)\hat{Y}_s\\&+\delta b+\frac{1}{2}b_{xx}(s, X_s, u_s)|\hat{X}_s|^2ds
        +\int_0^t  \sigma(s, X_s, u_s)+\sigma_x(s, X_s, u_s)\hat{X}_s\\&+\sigma_x(s, X_s, u_s)\hat{Y}_s+\delta \sigma+\delta\sigma_x\hat{X}_s+\frac{1}{2}\sigma_{xx}(s, X_s, u_s)|\hat{X}_s|^2dB_s\\
        &+\int_0^t \int_Ec(s, X_{s-}, u_s, e)+c_x(s, X_{s-}, u_s, e)\hat{X}_{s-}+c_x(s, X_{s-}, u_s, e)\hat{Y}_{s-}
        \\&+\frac{1}{2}c_{xx}(s, X_{s-}, u_s, e)\hat{X}^2_{s-}\tilde{N}(ds, de)\\
    \end{aligned}
    \end{equation*}
    Since we have for $\phi=b, \sigma, c$
    \begin{equation*}\small
    \begin{aligned}
    &\phi(s, X_s+\hat{X}_s+\hat{Y}_s, u^{\epsilon}_s, e)-\phi(s, X_s, u_s, e)=\phi(s, X_s+\hat{X}_s+\hat{Y}_s
    , u^{\epsilon}_s, e)-\phi(s, X_s, u^{\epsilon}_s, e)+\delta \phi\\
    &=\delta \phi+\phi_x(s, X_s, u^{\epsilon}_s, e)(\hat{X}_s+\hat{Y}_s)
    +\int_0^1\int_0^1\alpha \phi_{xx}(X_s+\alpha\beta(\hat{X}_s+\hat{Y}_s), u^{\epsilon}_s, e)d\alpha d\beta (\hat{X}_s+\hat{Y}_s)^2\\
    &=\delta \phi+\phi_x(s, X_s, u^{\epsilon}_s, e)(\hat{X}_s+\hat{Y}_s)+A_{\phi} (\hat{X}_s+\hat{Y}_s)^2
    \end{aligned}
    \end{equation*}
    we get
    \begin{equation*}\small
    \begin{aligned}
        X_t+\hat{X}_t+\hat{Y}_t&=x_0+\int_0^t b(s, X_s+\hat{X}_s+\hat{Y}_s, u^{\epsilon}_s)+\Lambda ds\\
        &+\int_0^t \sigma(s, X_s+\hat{X}_s+\hat{Y}_s, u^{\epsilon}_s)+G dB_s\\
        &+\int_0^t\int_E c(s, X_{s-}+\hat{X}_{s-}+\hat{Y}_{s-}, u^{\epsilon}_s, e)+F \tilde{N}(ds, de)
    \end{aligned}
    \end{equation*}
    where
    \begin{equation*}\small
    \begin{aligned}
    &\Lambda=\frac{1}{2}b_{xx}(s, X_s, u_s)|\hat{X}_s|^2- (b_x(s, X_s, u_s^{\epsilon})-b_x(s, X_s, u_s))(\hat{X}_s+\hat{Y}_s)-A_b (\hat{X}_s+\hat{Y}_s)^2\\
    &G=\frac{1}{2}\sigma_{xx}(s, X_s, u_s)|\hat{X}_s|^2- (\sigma_x(s, X_s, u_s^{\epsilon})-\sigma_x(s, X_s, u_s))\hat{Y}_s-A_{\sigma} (\hat{X}_s+\hat{Y}_s)^2\\
    \end{aligned}
    \end{equation*}
    \begin{equation*}\small
    \begin{aligned}
    &F\!=\!\frac{1}{2}c_{xx}(s, X_{s-}, u_s, e)|\hat{X}_{s-}|^2\!- \!(c_x(s, X_{s-}, u_s^{\epsilon},e)\!-\!c_x(s, X_{s-}, u_s,e))\hat{Y}_{s-}-A_{c} (\hat{X}_{s-}+\hat{Y}_{s-})^2\\
    \end{aligned}
    \end{equation*}
    By \cref{festimate}, we have
    \begin{equation*}\small
    \begin{aligned}
    E\left[\left(\int_0^T \Lambda ds\right)^2+\int_0^T G^2 ds+\int_0^T\int_E F^2 N(ds, de)\right]=o(\epsilon^2)
     \end{aligned}
    \end{equation*}
    So by the basic estimate we have
    \begin{equation*}\small
    \begin{aligned}
    E\left[\sup_{0\le t\le T}|X^{\epsilon}_t-X_t-\hat{X}_t-\hat{Y}_t|^2\right]&\le
    CE\left[\left(\int_0^T\Lambda ds\right)^2+\int_0^T G^2 ds\right]\\&+CE\left[\int_0^T\int_E F^2 N(ds, de)\right]
     \end{aligned}
    \end{equation*}
    which shows the result.
    \end{proof}
    Now we get the variation equation for cost functional. We have
    \[J(u)=E\left[\int_0^{T} f(t, X_t, u_t)dt+g(X_T)\right]\]define
    \begin{equation}
    \begin{aligned}
    \hat{J}&=E\left[\int_0^{T} f_x(t, X_t, u_t)(\hat{X}_t+\hat{Y}_t)+\frac{1}{2}f_{xx}(t, X_t, u_t)\hat{X}_t^2+\delta fdt\right]\\&+E\left[g_x(X_T)(\hat{X}_T+\hat{Y}_T)+\frac{1}{2}g_{xx}(X_T)(\hat{X}_T)^2\right]
    \end{aligned}
    \end{equation}
    Then we have the following lemma.
    \begin{lemma}
    \[\lim_{\epsilon\rightarrow 0}\frac{1}{\epsilon}(J(u^{\epsilon})-J(u)-\hat{J})=0\]
    \end{lemma}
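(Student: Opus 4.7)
The plan is to compute $J(u^{\epsilon}) - J(u)$ by a second-order Taylor expansion of $f$ and $g$ in the $x$-variable, and to match each term against the expression for $\hat{J}$. Specifically, for the running cost I would write
\[
f(t, X^{\epsilon}_t, u^{\epsilon}_t) - f(t, X_t, u_t) = \bigl[f(t, X^{\epsilon}_t, u^{\epsilon}_t) - f(t, X_t, u^{\epsilon}_t)\bigr] + \delta f,
\]
apply second-order Taylor in $x$ at $(t, X_t, u^{\epsilon}_t)$ to the bracketed piece, and treat $g(X^{\epsilon}_T) - g(X_T)$ analogously at $X_T$. The $\delta f$ term is already present in $\hat{J}$, so the task reduces to showing that the remaining linear and quadratic pieces, after division by $\epsilon$, converge to those appearing in $\hat{J}$.

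The second step is to reduce the arguments of the Taylor coefficients from $u^{\epsilon}$ to $u$ and to swap $X^{\epsilon}_t - X_t$ for the pair $(\hat{X}_t, \hat{Y}_t)$. The difference $f_x(t, X_t, u^{\epsilon}_t) - f_x(t, X_t, u_t)$ vanishes off $\mathcal{O} = \rrbracket\bar{t},\bar{t}+\epsilon\rrbracket \setminus \bigcup_n \llbracket T_n\rrbracket$, which has Lebesgue-in-time measure $\epsilon$; the polynomial growth of $f_x$ combined with $\sup_t \|X^{\epsilon}_t - X_t\|_2 = O(\epsilon^{1/2})$ and Hölder's inequality yields a contribution of order $\epsilon\cdot\epsilon^{1/2} = o(\epsilon)$, and boundedness of $f_{xx}$ makes the $f_{xx}$ correction on $\mathcal{O}$ of order $O(\epsilon^2)$. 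After this reduction, setting $Z_t := X^{\epsilon}_t - X_t - \hat{X}_t - \hat{Y}_t$, the previous lemma gives $\|Z_t\|_2 = o(\epsilon)$ uniformly in $t$, so Cauchy--Schwarz handles the replacement in the linear term. For the quadratic term I would use
\[
(X^{\epsilon}_t - X_t)^2 - \hat{X}_t^2 = 2\hat{X}_t\hat{Y}_t + 2\hat{X}_t Z_t + (\hat{Y}_t + Z_t)^2,
\]
combined with $\|\hat{X}\|_2 = O(\epsilon^{1/2})$, $\|\hat{Y}\|_2 = O(\epsilon)$, $\|Z\|_2 = o(\epsilon)$, to bound every cross term by $o(\epsilon)$.

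The main obstacle will be the Taylor remainders of the form
\[
R_g = \int_0^1 (1-\alpha)\bigl[g_{xx}(X_T + \alpha(X^{\epsilon}_T - X_T)) - g_{xx}(X_T)\bigr]\,d\alpha \cdot (X^{\epsilon}_T - X_T)^2,
\]
and its analogue for $f$, since mere boundedness of the second derivatives only gives $E[R_g] = O(\epsilon)$, whereas $o(\epsilon)$ is needed. I would dispatch this by Cauchy--Schwarz: the factor $(X^{\epsilon}_T - X_T)^2/\epsilon$ is bounded in $L^2$ thanks to the fourth-moment estimate $E|X^{\epsilon}_T - X_T|^4 = O(\epsilon^2)$, while the bracketed continuity factor vanishes in $L^2$ because $g_{xx}$ is continuous and bounded and $X^{\epsilon}_T \to X_T$ in $L^p$ for every $p$. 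The same argument works uniformly in $t$ for $R_f$ via Fubini, and the residual $\delta f_{xx}(X^{\epsilon}_t - X_t)^2$ term sits on $\mathcal{O}$ and is $O(\epsilon^2)$. Assembling these estimates yields $\epsilon^{-1}(J(u^{\epsilon})-J(u)-\hat{J}) \to 0$ as claimed.
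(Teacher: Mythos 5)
Your proposal is correct and follows essentially the same route as the paper: a second-order Taylor expansion of $f$ and $g$ in $x$, reduction of every mismatch term to the moment bounds $\|\hat{X}\|=O(\epsilon^{1/2})$, $\|\hat{Y}\|=O(\epsilon)$ together with the preceding lemma's estimate $E\left[\sup_t|X^{\epsilon}_t-X_t-\hat{X}_t-\hat{Y}_t|^2\right]=o(\epsilon^2)$, and dominated convergence (continuity plus boundedness of the second derivatives) for the Taylor remainders. The only difference is organizational --- you expand $J(u^{\epsilon})-J(u)$ around $X$ with increment $X^{\epsilon}-X$ and then substitute $\hat{X}+\hat{Y}$, whereas the paper reconstitutes $J(u)+\hat{J}$ as an evaluation of $f,g$ at $X+\hat{X}+\hat{Y}$ and then compares with $J(u^{\epsilon})$ --- and your handling of the remainder terms is, if anything, more explicit than the paper's.
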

    \begin{proof}
    \begin{equation*}\small
    \begin{aligned}
    J(u)+\hat{J}&=E\left[\int_0^{T} f(t, X_t, u_t)+f_x(t, X_t, u_t)(\hat{X}_t+\hat{Y}_t)+\frac{1}{2}f_{xx}(t, X_t, u_t)\hat{X}_t^2+\delta fdt\right]\\&+E\left[g(X_T)+g_x(X_T)(\hat{X}_T+\hat{Y}_T)+\frac{1}{2}g_{xx}(X_T)(\hat{X}_T)^2\right]\\
    &=E\left[\int_0^T  f(t, X_t+\hat{X}_t+\hat{Y}_t, u^{\epsilon}_t)+Hdt\right]+E\left[g(X_T+\hat{X}_T+\hat{Y}_T)+I\right]
    \end{aligned}
    \end{equation*}
    where
    \begin{equation*}\small
    \begin{aligned}
    &H=\frac{1}{2}f_{xx}(s, X_s, u_s)\hat{X}^2_s-\delta f_x(\hat{X}_s+\hat{Y}_s)-A_f (\hat{X}_s+\hat{Y}_s)^2\\
    &I=-\int_0^1\int_0^1\alpha g(X_T+\alpha\beta(\hat{X}_T+\hat{Y}_T))d\alpha d\beta(\hat{X}_T+\hat{Y}_T)^2+\frac{1}{2}g_{xx}(X_T)(\hat{X}_T)^2
    \end{aligned}
    \end{equation*}
    Then
    \begin{equation*}\small
    \begin{aligned}
    |J(u^{\epsilon})-J(u)-\hat{J}|^2&\le CE\left[\int_0^T |f(t, X_t+\hat{X}_t+\hat{Y}_t, u^{\epsilon}_t)-f(t, X^{\epsilon}_t, u^{\epsilon}_t)|^2dt+\left(\int_0^THdt\right)^2\right]\\
    &+E\left[\left|g(X_T+\hat{X}_T+\hat{Y}_T)-g(X^{\epsilon}_T)\right|^2+I^2\right]\\
    &\le CE\left[\sup_{0\le t\le T}|X^{\epsilon}_t-X_t-\hat{X}_t-\hat{Y}_t|^2\right]+E\left[\left(\int_0^THdt\right)^2+I^2\right]\\
    &=o(\epsilon^2)
    \end{aligned}
    \end{equation*}
    By the same method we can show that $E\left[\left(\int_0^THdt\right)^2+I^2\right]=o(\epsilon^2)$,  which proves the result.
    \end{proof}

\section{Adjoint Equations and the Maximum Principle}

We introduce the first order and second order adjoint equation. \\
    First order:
    \begin{equation}\small\label{p}
    \begin{aligned}
    p_t&=g_x(X_T)+\int_t^T \left(b_xp_s+\sigma_xq_s+f_x+\int_E \mathbb{E}[c_x|\mathscr{P}\otimes\mathscr{E}]k_s\lambda(de)\right)ds\\&-\int_t^T q_sdB_s-\int_t^T\int_E k_s\tilde{N}(ds, de)
    \end{aligned}
    \end{equation}
    And the second order:
    \begin{equation}\small\label{P}
    \begin{aligned}
    P_t&=g_{xx}(X_T)+\int_t^T \Big(2b_xP_s+2\sigma_xQ_s+f_{xx}+b_{xx}p_s+\sigma_{xx}q_s+P_s\sigma_x^2\\&+\int_E  \mathbb{E}[(c_x^2+2c_x)|\mathscr{P}\otimes\mathscr{E}]K_s+
    \mathbb{E}[c_{xx}|\mathscr{P}\otimes\mathscr{E}]k_s+\mathbb{E}[c_x^2|\mathscr{P}\otimes\mathscr{E}]P_s\lambda(de)\Big)ds\\&-\int_t^T Q_sdB_s-\int_t^T\int_E K_s\tilde{N}(ds, de)
    \end{aligned}
    \end{equation}
    where $\phi_x=\phi_x(t, X_t, u_t), \phi_{xx}=\phi_{xx}(t, X_t, u_t).$


     In order to get the existence and uniqueness of the two BSDE above, we refer to Lemma 2.4 in \cite{Tang1994necessary}. Since $\phi_x, \phi_{xx}$ are bounded, there exists a unique solution of (\ref{p})  $(p,q,k)\in S^2[0,T]\times M^2[0,T]\times F^2[0,T]$ and a unique solution of (\ref{P})  $(P,Q,K)\in S^2[0,T]\times M^2[0,T]\times F^2[0,T]$, where
      \[S^2[0, T]:=\left\{Y\mid Y \text{ has c\`adl\`ag paths, adapted and }E\left[\sup_{0\le t\le T}|Y_t|^2\right]<\infty\right\}\]with norm $\Vert Y\Vert^2=E\left[\sup_{0\le t\le T}|Y_t|^2\right]$,
      \[M^2[0, T]=\left\{Z\mid Z\text{ is predictable and }E\left[\int_0^T|Z_s|^2ds\right]<\infty\right\}\]
      with norm $\Vert Z\Vert^2=E\left[\int_0^T|Z_s|^2ds\right]$, and
      \[F^2[0, T]=\left\{K\mid K \text{ is $E$-predictable and }E\left[\int_0^T\int_E |K_s|^2\lambda(de)dt<\infty\right]\right\}\]
      with norm $\Vert K\Vert^2=E\left[\int_0^T\int_E|K_t|^2N(dt, de)\right]$.
    Next, we need an It\^o's formula for processes with jumps referring to Theorem 32 and Theorem 33 from \cite{book:12482}.
    \begin{lemma}
    Let $X^1,X^2,...,X^d$ be semimartingales, and $F$ be a $C^2$ function on $R^d$. Set $X=(X^1,X^2,...,X^d)$, then
    \begin{equation*}\small
    \begin{aligned}
    F(X_t)\!-\!F(X_0)&\!=\!\sum^{d}_{i=1}\int_0^t\frac{\partial F}{\partial x_i}(X_{s-})dX^i_s\!+\!\frac{1}{2}\sum^{d}_{i=1,j=1}\int_0^t\frac{\partial^2 F}{\partial x_i\partial x_j}(X_{s-})d[X^i,X^j]_s\!+\!\sum_{s\le t}\eta_s(F)
    \end{aligned}
    \end{equation*}
    where
    \begin{equation*}
    \eta_s(F)=F(X_s)-F(X_{s-})-\sum^d_{i=1}\frac{\partial F}{\partial x_i}(X_{s-})\Delta X^i_s-\frac{1}{2}\sum^{d}_{i=1,j=1}\frac{\partial^2 F}{\partial x_i\partial x_j}(X_{s-})\Delta X_s^i\Delta X_s^j
    \end{equation*}
    and
    \begin{equation*}
     \Delta X_s^i= X^i_s-X^i_{s-}
    \end{equation*}
    \end{lemma}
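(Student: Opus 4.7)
I would follow the classical Meyer--Protter polygonal argument, proceeding by localization and Taylor expansion along partitions. First, a standard stopping-time localization reduces to the case where $X$ takes values in a fixed compact set and the partial derivatives of $F$ up to order two are bounded on a neighbourhood of its range; this removes all integrability issues in what follows. I would then argue pathwise in $\omega$ for most of the argument, using that $X$ is càdlàg and has finite quadratic variation.

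The main step is to choose a sequence of partitions $0=t_0^n<t_1^n<\cdots<t_{k_n}^n=t$ whose mesh tends to $0$ and which, for each fixed $\epsilon>0$, eventually contains the finitely many jump times of $X$ of magnitude greater than $\epsilon$. Writing the telescoping identity
\[F(X_t)-F(X_0)=\sum_{k}\bigl[F(X_{t_{k+1}^n})-F(X_{t_k^n})\bigr]\]
and applying a second-order Taylor expansion of $F$ at $X_{t_k^n}$ with integral remainder decomposes the right-hand side into three groups: first-order increments of the form $\sum_k \partial_i F(X_{t_k^n})(X^i_{t_{k+1}^n}-X^i_{t_k^n})$, second-order increments $\tfrac12\sum_k \partial_{ij} F(X_{t_k^n})(X^i_{t_{k+1}^n}-X^i_{t_k^n})(X^j_{t_{k+1}^n}-X^j_{t_k^n})$, and a third-order remainder.

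The convergence of the first sum to $\sum_i\int_0^t\partial_iF(X_{s-})\,dX^i_s$ is exactly the definition of the stochastic integral as a limit in probability of Riemann sums for a bounded left-continuous integrand against a semimartingale. The convergence of the second sum to $\tfrac12\sum_{i,j}\int_0^t\partial_{ij}F(X_{s-})\,d[X^i,X^j]_s$ uses the characterization of the quadratic covariation as the limit in probability of $\sum_k \Delta_k X^i\Delta_k X^j$ along partitions whose mesh tends to $0$, combined with uniform continuity of $\partial_{ij}F$ on the range of $X$.

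The hard part, as always, is the jumps. I would split the jumps of $X$ into those of size larger than $\epsilon$ (only finitely many in $[0,t]$, which by the choice of partition appear as individual intervals, and whose Taylor-expansion error is exactly the corresponding piece of $\sum_{s\le t}\eta_s(F)$) and those of size at most $\epsilon$ (whose contribution must be absorbed into the quadratic variation term up to a bias vanishing as $\epsilon\downarrow0$). The key estimate is that for small jumps, by Taylor with bounded third derivatives after a further approximation of $F$ by $C^3$ functions (or directly by the modulus of continuity of $\partial_{ij}F$),
\[\sum_{s\le t,\,|\Delta X_s|\le \epsilon}|\eta_s(F)|\le C\,\omega(\epsilon)\sum_{s\le t}|\Delta X_s|^2,\]
where $\omega$ is a modulus of continuity of the second derivatives, and the sum is bounded by $[X,X]_t<\infty$ a.s. The main technical obstacle is to justify the simultaneous limits $n\to\infty$ and $\epsilon\downarrow 0$: one typically fixes $\epsilon$, takes $n\to\infty$ using dominated convergence together with Doob's inequality for the martingale parts of the increments of $X$, and then lets $\epsilon\downarrow 0$, invoking the estimate above to kill the small-jump remainder. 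Combining the three limits yields the stated formula.
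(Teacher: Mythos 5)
Your proposal is the classical Meyer--Protter proof of It\^o's formula for general semimartingales, which is precisely the argument behind Theorems 32--33 of Protter's book that the paper cites in lieu of giving its own proof. The localization, the telescoping Taylor expansion along partitions refined to contain the large jump times, the Riemann-sum convergence of the first- and second-order terms, and the modulus-of-continuity bound $\sum_{|\Delta X_s|\le\epsilon}|\eta_s(F)|\le C\,\omega(\epsilon)[X,X]_t$ controlling the small-jump remainder are all correct and standard; there is no gap.
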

    Apply It\^o's formula for $p_t\hat{X}_t, p_t\hat{Y}_t$ and $P_t|\hat{X}_t|^2$, we get
    \begin{equation}\small\label{px}
    \begin{aligned}
    E\left[p_T\hat{X}_T\right]&=E\int_0^Tp_{t-}d\hat{X}_t+E\int_0^T\hat{X}_{t-}dp_t+E[p, \hat{X}]_T\\
     &=E\int_0^T\left(p_t\delta b+q_t\delta\sigma-\hat{X}_tf_x\right)dt
    \end{aligned}
    \end{equation}
    and
    \begin{equation}\small\label{py}
    \begin{aligned}
     &E\left[p_T\hat{Y}_T\right]=E\int_0^Tp_{t-}d\hat{Y}_t+E\int_0^T\hat{Y}_{t-}dp_t+E[p, \hat{Y}]_T\\&=E\int_0^T\left(\frac{1}{2}b_{xx}p_t|\hat{X}_t|^2+
     \frac{1}{2}\sigma_{xx}q_t|\hat{X}_t|^2-\hat{Y}_tf_x+\delta\sigma_x\hat{X}_tq_t
     +\int_E\frac{1}{2}\mathbb{E}[c_{xx}|\mathscr{P}\otimes\mathscr{E}]k_t\hat{X}_t^2\right)dt\\
    \end{aligned}
    \end{equation}
    and
    \begin{equation}\small\label{pxx}
    \begin{aligned}
    E\left[P_T|\hat{X}_T|^2\right]&=E\left[\int_0^T|\hat{X}_{t-}|^2dP_t+\int_0^T2P_{t-}\hat{X}_{t-}d\hat{X}_t\right]\\
    &+E\left[\int_0^TP_{t-}d[\hat{X},\hat{X}]_t+\int_0^T2\hat{X}_{t-}d[\hat{X}, P]_t+\sum_{t\le T}\Delta P_t(\Delta\hat{X}_t)^2\right]\\
    &=E\int_0^TP_t(\delta\sigma)^2-\hat{X}_t^2\left(f_{xx}+p_tb_{xx}+q_t\sigma_{xx}+\int_Ek_t\mathbb{E}[c_{xx}
      |\mathscr{P}\otimes\mathscr{E}]\lambda(de)\right)dt\\
    &+E\int_0^T\left(2P_t\hat{X}_t\delta b+2Q_t\hat{X}_t\delta\sigma+2P_t\sigma_x\hat{X}_t\delta\sigma\right) dt
    \end{aligned}
    \end{equation}
    In \ref{pxx}, we use the fact
    \begin{equation*}\small
    \begin{aligned}
    \sum_{t\le T}\Delta P_t(\Delta\hat{X}_t)^2&=\sum_{t\le T}\int_EK_tN(\{t\},de)\left(\int_Ec_x\hat{X}_{t-}N(\{t\},de)\right)^2\\
    &=\sum_{t\le T}\int_E K_tc^2_x\hat{X}^2_{t-}N(\{t\},de)\\
    &=\int_0^T\int_EK_tc^2_x\hat{X}^2_{t-}N(dt,de)
    \end{aligned}
    \end{equation*}
    The second equality follows from the fact that for any $A\in \mathscr{E}$, $N(\{t\},A)=1$ or $0$.

    From \cref{px,py,pxx}, we can get the form of $g_x(X_T)(X_T+Y_T)$ and $g_{xx}(X_T)X_T^2$. Then we have

    \begin{equation}\small\label{Jhat}
    \begin{aligned}
    \hat{J}&=E\left[\int_0^{T} \left(p_t\delta b+q_t\delta\sigma+\delta f+\frac{1}{2}P_t(\delta\sigma)^2\right)dt\right]+o(\epsilon)\\
    \end{aligned}
    \end{equation}
    where $o(\epsilon)$ represents $E\left[\int_0^T\left(\delta\sigma_x\hat{X}_tq_t+P_t\sigma_x\hat{X}_t\delta\sigma+P_t\hat{X}_t\delta b+\hat{X}_t\delta\sigma Q_t\right)dt\right]. $

    We define $H(t, x, u, p, q);=pb(t, x, u)+q\sigma(t, x, u)+f(t, x, u)$.  Then we have

    \begin{theorem}
    Let Assumption H be satisfied. Assume that $u$ is the optimal control, $X$ is the trajectory of $u$, and $(p, q)$ satisfies \cref{p}, and $P$ satisfies \cref{P}. Then , we have a.e a.s: for any $v\in U$,
    \begin{equation}
    \begin{aligned}
    H(t, X_t, v, p_t, q_t)-H(t, X_t, u_t, p_t, q_t)
    +\frac{1}{2}P_t(\sigma(t, X_t, v)-\sigma(t, X_t, u_t))^2\ge 0.
    \end{aligned}
    \end{equation}
    \end{theorem}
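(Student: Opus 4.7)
The plan is to turn the variational expansion of the cost into a pointwise inequality at a generic time $\bar t$, exploiting the special structure of the spike variation in Section 4. Optimality of $u$ gives $J(u^\epsilon)\ge J(u)$, which via the preceding lemma rewrites as $\hat J+o(\epsilon)\ge 0$. Substituting the duality representation \cref{Jhat} then yields
\begin{equation*}
E\left[\int_0^T\Bigl(p_t\,\delta b+q_t\,\delta\sigma+\delta f+\tfrac{1}{2}P_t(\delta\sigma)^2\Bigr)dt\right]+o(\epsilon)\ge 0.
\end{equation*}
Because $\delta b,\delta\sigma,\delta f$ are supported on $\mathcal{O}=\rrbracket\bar t,\bar t+\epsilon\rrbracket\setminus\bigcup_n\llbracket T_n\rrbracket$ and the jump graphs have zero Lebesgue-time measure, this integral collapses to one over $(\bar t,\bar t+\epsilon]$ with $u^\epsilon_t=v$.

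Next, I would divide by $\epsilon$ and let $\epsilon\downarrow 0$. Applying the Lebesgue differentiation theorem to the $dt\otimes dP$-integrable integrand produces, for a.e.\ $\bar t\in[0,T)$,
\begin{equation*}
E\Bigl[p_{\bar t}(b(\bar t,X_{\bar t},v)-b(\bar t,X_{\bar t},u_{\bar t}))+q_{\bar t}(\sigma(\bar t,X_{\bar t},v)-\sigma(\bar t,X_{\bar t},u_{\bar t}))+f(\bar t,X_{\bar t},v)-f(\bar t,X_{\bar t},u_{\bar t})+\tfrac12 P_{\bar t}(\sigma(\bar t,X_{\bar t},v)-\sigma(\bar t,X_{\bar t},u_{\bar t}))^2\Bigr]\ge 0.
\end{equation*}
Since the $\mathscr{F}_{\bar t}$-measurable $v$ is free, I would then choose $v=v_0I_A+u_{\bar t}I_{A^c}$ for an arbitrary $A\in\mathscr{F}_{\bar t}$ and a fixed $v_0\in U$, so that the expectation inequality upgrades to the pointwise almost-sure inequality at $\bar t$ for each such $v_0$. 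Regrouping the three Hamiltonian pieces into $H(\bar t,X_{\bar t},\cdot,p_{\bar t},q_{\bar t})$ delivers the claimed inequality.

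The principal technical obstacle is a rigorous justification that the residual term in \cref{Jhat},
\begin{equation*}
R_\epsilon:=E\int_0^T\bigl(\delta\sigma_x\,\hat{X}_t\,q_t+P_t\sigma_x\hat{X}_t\,\delta\sigma+P_t\hat{X}_t\,\delta b+\hat{X}_t\,\delta\sigma\,Q_t\bigr)dt,
\end{equation*}
is genuinely $o(\epsilon)$ and not merely $O(\epsilon)$. I would handle each summand by Cauchy--Schwarz combined with the lemma $E[\sup_t|\hat X_t|^p]=O(\epsilon^{p/2})$ and the fact that each $\delta$ factor is supported on $(\bar t,\bar t+\epsilon]$. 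For the representative term $P_t\hat X_t\,\delta b$, boundedness of $\delta b$ and Jensen's inequality give
\begin{equation*}
\Bigl|E\!\int_0^T\! P_t\hat X_t\,\delta b\,dt\Bigr|\le C\bigl(E\sup_t|\hat X_t|^2\bigr)^{1/2}\Bigl(E\Bigl(\!\int_{\bar t}^{\bar t+\epsilon}\!|P_t|\,dt\Bigr)^{2}\Bigr)^{1/2}\le C\,\epsilon^{1/2}\!\cdot\epsilon\,\|P\|_{S^2}=O(\epsilon^{3/2}),
\end{equation*}
and the terms involving $q,Q$ are treated analogously, the extra smallness coming from absolute continuity of $t\mapsto E\int_0^t(|q_s|^2+|Q_s|^2)ds$. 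Once this bookkeeping is done, the passage to the limit and the upgrade from expectations to the a.s.\ pointwise inequality are standard, and the proof is complete.
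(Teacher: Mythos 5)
Your proposal is correct and follows essentially the same route as the paper: exploit $J(u^\epsilon)\ge J(u)$ together with the representation \cref{Jhat}, note that the jump graphs $\bigcup_n\llbracket T_n\rrbracket$ are $P\times Leb$-negligible so the integral collapses to $(\bar t,\bar t+\epsilon]$, divide by $\epsilon$ and pass to the limit, then localize with $v=v_0I_A+u_{\bar t}I_{A^c}$ for $A\in\mathscr{F}_{\bar t}$. Your extra bookkeeping on the residual $R_\epsilon$ (which the paper simply labels $o(\epsilon)$) is a welcome addition; only note that $\delta b$ is not literally bounded but has all moments since $u\in U_{ad}$ and $v$ is bounded, so your H\"older argument still goes through.
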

    \begin{proof}
    Notice that $\displaystyle\bigcup_{n=1}^{\infty} \llbracket T_{n}\rrbracket$ is negligible under $P\times Leb$,  so by \cref{Jhat} we have
    \begin{equation*}
    \begin{aligned}
    \hat{J}&=E\Bigg[\int_0^T I_{(\bar{t}, \bar{t}+\epsilon]}\Bigg(p_t(b(t, X_t, v)-b(t, X_t, u))+q_t(\sigma(t, X_t, v)-\sigma(t, X_t, u))\\&+
    (f(t, X_t, v)-f(t, X_t, u))+\frac{1}{2}P_t(\sigma(t, X_t, v)-\sigma(t, X_t, u))^2\Bigg)dt\Bigg]+o(\epsilon)
    \end{aligned}
    \end{equation*}
    then both sides are divided by $\epsilon$ and let $\epsilon\rightarrow 0$, we have for a.e $\bar{t}$
    \begin{equation*}
    \begin{aligned}
    E\left[\left(H(\bar{t}, X_{\bar{t}}, v, p_{\bar{t}}, q_{\bar{t}})-H(\bar{t}, X_{\bar{t}}, u, p_{\bar{t}}, q_{\bar{t}})
    +\frac{1}{2}P_{\bar{t}}(\sigma(\bar{t}, X_{\bar{t}}, v)-\sigma(\bar{t}, X_{\bar{t}}, u))^2\right)\right]\ge 0
    \end{aligned}
    \end{equation*}
    Then for any $A\in \mathscr{F}_{\bar{t}}$ and $w\in U$, let $v=wI_A+uI_{A^c}$, we have
    \begin{equation*}
    \begin{aligned}
    E\Bigg[I_A&\Bigg(H(\bar{t}, X_{\bar{t}}, w, p_{\bar{t}}, q_{\bar{t}})\!-\!H(\bar{t}, X_{\bar{t}}, u, p_{\bar{t}}, q_{\bar{t}})
    \!+\!\frac{1}{2}P_{\bar{t}}(\sigma(\bar{t}, X_{\bar{t}}, w)\!-\!\sigma(\bar{t}, X_{\bar{t}}, u))^2\Bigg)\Bigg]\!\ge\! 0
    \end{aligned}
    \end{equation*}
    which means a.e a.s
    \begin{equation*}
    \begin{aligned}
    H(\bar{t}, X_{\bar{t}}, w, p_{\bar{t}}, q_{\bar{t}})-H(\bar{t}, X_{\bar{t}}, u, p_{\bar{t}}, q_{\bar{t}})
    +\frac{1}{2}P_{\bar{t}}(\sigma(\bar{t}, X_{\bar{t}}, w)-\sigma(\bar{t}, X_{\bar{t}}, u))^2\ge 0
    \end{aligned}
    \end{equation*}
    \end{proof}

\section{Conclusions}
\label{sec:conclusions}
    In this paper, we introduce a new method of variation.  With the help of our new variation, we overcome the difficulty that the jumps caused in $L^p$ estimate, in other words, \cref{mest} holds, and the order of this estimate grows with the growth of $p$, this feature is important to make the variation equations effective.

    The form of our maximum principle with jumps is the same as the form of maximum principle in \cite{peng1990general} without jumps. The reason is that both maximum principles are hold a.e a.s. In our case with jumps, since the measure of all jumps' graphs is a negligible set under $P\times Leb$, jumps does not influence our result. In other words, our maximum principle only describe the optimal control on the area that $N$ is continuous, it has no information about the optimal control on the time $N$ jumps. However, this is a rigorous maximum principle obtained in a clear and concise mathematical framework and laid a solid foundation for further related theoretical and application research. Our future research is to find a way to characterize optimal control on the time $N$ jumps and explore wide applications in practice.

\appendix
\section{Existence and Uniqueness of SDE and $L^p$ estimate}
Given a SDE with jump:
    \begin{equation}\label{SDE}
    X_t=x_0+\int_0^t b(s, X_s)ds+\int_0^t \sigma(s, X_s)dB_s+\int_0^t \int_E c(s, X_{s-}, e)\tilde{N}(ds, de)
    \end{equation}
    where $x_0\in R^n$, $b:\Omega\times [0, T]\times R^n \rightarrow R^n$, $\sigma:\Omega\times [0, T]\times R^n \rightarrow R^{n\times d}$, $c:\Omega\times [0, T]\times R^n\times E\rightarrow R^n$, $d$ is the dimension of Brownian Motion and $n$ is the dimension of $X$. We introduce a Banach space \[S^2[0, T]:=\left\{X\mid X \text{ has c\`adl\`ag paths and adapted and }E\left[\sup_{0\le t\le T}|X_t|^2\right]<\infty\right\}\]with norm $\Vert X\Vert^2=E\left[\sup_{0\le t\le T}|X_t|^2\right]$. We have the following assumptions:\\
    \textbf{Assumption H1:}\\
    \begin{itemize}
    \item $b$ is $\mathscr{G}\otimes \mathscr{B}(R^n)/\mathscr{B}(R^n)$ measurable, $\sigma$ is $\mathscr{G}\otimes \mathscr{B}(R^n) /\mathscr{B}(R^{n\times d})$ measurable, $c$ is $\mathscr{G}\otimes\mathscr{E}\otimes \mathscr{B}(R^n) /\mathscr{B}(R^n)$ measurable.
    \item $b, \sigma, c$ are uniform lipschitz continuous about $x$.
    \item $E\int_0^T |b(t, \omega, 0)|^2dt<\infty$, $E\int_0^T |\sigma(t, \omega, 0)|^2dt<\infty$, \\ $E\int_0^T\int_E |c(t, \omega, 0,e)|^2N(ds, de)<\infty$.
    \end{itemize}
    \begin{theorem}\label{eusde}
    Under Assumption H1,  \cref{SDE} has a unique solution in $S^2[0, T]$.
    \end{theorem}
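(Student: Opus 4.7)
\medskip
The plan is to use Picard iteration in the Banach space $S^2[0,T]$. Define the map $\Phi: S^2[0,T] \to S^2[0,T]$ by
\begin{equation*}
\Phi(X)_t := x_0 + \int_0^t b(s, X_s)\, ds + \int_0^t \sigma(s, X_s)\, dB_s + \int_0^t \int_E c(s, X_{s-}, e)\, \tilde N(ds, de),
\end{equation*}
set $X^0 \equiv x_0$ and $X^{n+1} := \Phi(X^n)$. First I would check that $\Phi$ is well defined: for $X \in S^2[0,T]$, the composition $b(s,X_s)$ is progressive and $c(s,X_{s-},e)$ is $E$-progressive, so all three integrals make sense in the theory built in Section~2. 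Using Lipschitz continuity together with the integrability assumptions on $b(\cdot,0)$, $\sigma(\cdot,0)$, $c(\cdot,0,\cdot)$, one gets the linear growth bound $|\phi(s,x)|^2 \le C(1 + |\phi(s,0)|^2 + |x|^2)$, which together with the last proposition of Section~2 (the $L^2$ isometry for the compensated Poisson integral) shows $\Phi(X) \in S^2[0,T]$.

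Next I would estimate $\Phi(X) - \Phi(Y)$ in $S^2$. Applying Doob's $L^2$ inequality to the two martingale parts and Cauchy--Schwarz to the drift, I get
\begin{equation*}
E\Bigl[\sup_{s \le t}|\Phi(X)_s - \Phi(Y)_s|^2\Bigr] \le C\, E\!\int_0^t |b_\Delta|^2 + |\sigma_\Delta|^2\, ds + C\, E\!\int_0^t\!\!\int_E |c_\Delta|^2\, N(ds,de),
\end{equation*}
where the subscript $\Delta$ denotes the difference of the coefficient evaluated at $X$ and at $Y$. The last term is rewritten via the compensator identity from Section~2 as $E\int_0^t\int_E \mathbb{E}[|c_\Delta|^2 \mid \mathscr{P}\otimes \mathscr{E}]\,\lambda(de)\,ds$, and then the uniform Lipschitz assumption yields $|c_\Delta|^2 \le L^2|X_{s-} - Y_{s-}|^2$ pointwise in $e$, so everything collapses to a bound of the form
\begin{equation*}
E\Bigl[\sup_{s \le t}|\Phi(X)_s - \Phi(Y)_s|^2\Bigr] \le K \int_0^t E\Bigl[\sup_{r \le s}|X_r - Y_r|^2\Bigr]\, ds,
\end{equation*}
with $K$ depending on $T$, $L$, $\lambda(E)$.

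Iterating this bound gives $\|X^{n+1} - X^n\|_{S^2}^2 \le (KT)^n/n! \cdot \|X^1 - X^0\|_{S^2}^2$, so $\{X^n\}$ is Cauchy in $S^2[0,T]$. Its limit $X$ inherits càdlàg adapted paths and, by passing to the limit in each integral (Doob and the jump isometry again to control the stochastic integrals in $L^2$), solves the SDE. Uniqueness follows by the same differencing estimate applied to two hypothetical solutions $X$ and $Y$ in $S^2[0,T]$ combined with Gronwall.

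The main obstacle is the jump term, because $c(s,X_{s-},e)$ is only $E$-progressive (not predictable), so the classical Poisson isometry $E[|\int H\,\tilde N|^2] = E[\int H^2 \lambda(de)\,ds]$ is not directly available. The fix is to invoke the last proposition of Section~2, which gives $E[|\int H\,\tilde N|^2] = E[\int \mathbb{E}[H^2 \mid \mathscr{P}\otimes\mathscr{E}]\,\lambda(de)\,ds]$, and then to exploit that the pointwise Lipschitz bound $|H|^2 \le L^2|X_{s-}-Y_{s-}|^2$ is already $\mathscr{P}\otimes\mathscr{E}$-measurable so the conditional expectation passes through harmlessly. Once this observation is in place, the proof is routine Picard iteration.
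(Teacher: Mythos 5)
Your proposal is correct and follows essentially the same route as the paper: both reduce the problem to a fixed-point argument in $S^2[0,T]$ after verifying that the jump integral is well defined for $E$-progressive integrands and controlling it via the $L^2$ identities of Section~2. The only cosmetic difference is that you run a Picard iteration with the $(KT)^n/n!$ decay on the whole interval, whereas the paper shows the map is a strict contraction on a sufficiently small time interval and then pastes solutions together; the two devices are standard and interchangeable here.
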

    \begin{proof}
    First we show that for each $X$ in $S^2[0, T]$, $\int_0^t\int_E c(s, X_{s-}, e)\tilde{N}(ds, de)$ is well defined.
    Since $X_{s-}$ is left continuous, it is progressive, and $c(s,\omega,x,e)$ is $E$-progressive by assumption. This implies that $c(s, X_{s-}, e)$ is $E$-progressive. And for any $t\in[0, T]$
    \begin{equation*}
    \begin{aligned}
    E\left[\int_0^t\int_E |c(s, X_{s-}, e)|^2N(ds, de)\right]&\le CE\left[\int_0^t\int_E |c(s, \omega,0,e)|^2+|X_{s-}|^2N(ds, de)\right]\\&\le C+Ct\lambda(E)E\left[\sup_{0\le s\le t}|X_s|^2\right]<\infty
    \end{aligned}
    \end{equation*}
    That means that the stochastic integral is well defined.

    Next we show that there is a unique solution in small time duration. We construct a map from $S^2[0, T]$ to $S^2[0, T]$:
    \begin{equation*}
    \mathscr{T}(X)_t=x_0+\int_0^t b(s, X_s)ds+\int_0^t\sigma(s, X_s)dB_s+\int_0^t\int_E c(s, X_{s-}, e)\tilde{N}(ds, de)
    \end{equation*}
    It is easy to show that the image of $\mathscr{T}$ is actually in $S^2[0, T]$, then we show it is a contraction. For any $X, Y\in S^2[0, T]$,
    \begin{equation*}\small
    \begin{aligned}
    \Vert\mathscr{T}(X)-\mathscr{T}(Y)\Vert^2&\le CE\left[\left(\int_0^T |b(t, X_t)-b(t, Y_t)|dt\right)^2\right]\\&+CE\left[\sup_{0\le t\le T}\left|\int_0^t \sigma(t, X_t)-\sigma(t, Y_t)dB_t\right|^2\right]\\&+CE\left[\sup_{0\le t\le T}\left|\int_0^t\int_E c(t, X_{t-}, e)-c(t, Y_{t-}, e)\tilde{N}(dt, de)\right|^2\right]\\
    &\le C\Vert(X-Y)\Vert^2(T+T^2)+CE\left[\int_0^T\int_E|X_{t-}-Y_{t-}|^2\lambda(de)dt\right]\\
    &\le C\Vert(X-Y)\Vert^2(T+T^2)
    \end{aligned}
    \end{equation*}
    $C$ is a constant not related to $T$ but changed every step. So we can choose $T$ small enough that $C(T+T^2)<1$, then $\mathscr{T}$ is a contraction.

    For arbitrary $T$,  we can split $T$ into finite small pieces, then we get a unique solution on each piece and connect them together.
    \end{proof}
    \begin{remark}
    The difference between our results and the results in \cite{rong2006theory} is that in our case $c$ is $E$-progressive and in \cite{rong2006theory}'s case $c$ is $E$-predictable. In fact from the proof above, the difference is slight.
    \end{remark}

    The theorem below is the $L^p$ estimate:
    \begin{theorem}\label{lp est}
    For $p\ge 2$, suppose that $X^i, i=1, 2. $ is the solution of the follow equations
    \begin{equation*}\small
    \begin{aligned}
    X^i_t=x^i_0+\int_0^t b^i(s, X^i_s)ds+\int_0^t \sigma^i(s, X^i_s)dB_s+\int_0^t \int_E c^i(s, X^i_{s-}, e)\tilde{N}(ds, de)
    \end{aligned}
    \end{equation*}
    which satisfy assumption H1, then we have
    \begin{equation}\small
    \begin{aligned}
    &E\left[\sup_{0\le t\le T}|X^1_t-X_t^2|^p\right]\le C|x_0^1-x_0^2|^p+CE\left[\left(\int_0^T|b^1(t, X^1_t)
    -b^2(t, X^1_t)|dt\right)^p\right]\\&+CE\left[\left(\int_0^T|\sigma^1(t, X^1_t)
    -\sigma^2(t, X^1_t)|^2dt\right)^\frac{p}{2}\right]\\&+CE\left[\left(\int_0^T\int_E|c^1(t, X^1_{t-}, e)
    -c^2(t, X^1_{t-}, e)|^2N(dt, de)\right)^\frac{p}{2}\right]
    \end{aligned}
    \end{equation}
    $C$ is a positive real number related to $p, T$ and the Lipschitz constant.
    \end{theorem}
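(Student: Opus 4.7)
The plan is to prove the $L^p$ stability estimate by the standard difference-equation method: introduce $\Delta X_t := X^1_t - X^2_t$, derive the integral equation it satisfies, split each coefficient difference into a Lipschitz part and a forcing part, apply Burkholder-Davis-Gundy to the martingale terms, and close via Gronwall's inequality. The jump-integral setting of the paper requires that one use the quadratic-variation identity $[H.\tilde N, H.\tilde N]_t=\int_0^t\int_E H^2\,N(ds,de)$ from Proposition 2.6, so that BDG produces an $N$-integral on the right-hand side (matching the form in the theorem statement).

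First I would subtract the two SDEs to get
\[
\Delta X_t = (x_0^1-x_0^2) + \int_0^t\bigl(b^1(s,X^1_s)-b^2(s,X^2_s)\bigr)ds + \int_0^t\bigl(\sigma^1-\sigma^2\bigr)dB_s + \int_0^t\!\int_E(c^1-c^2)\,\tilde N(ds,de),
\]
and decompose each integrand as, e.g., $b^1(s,X^1_s)-b^2(s,X^2_s) = [b^2(s,X^1_s)-b^2(s,X^2_s)] + [b^1(s,X^1_s)-b^2(s,X^1_s)]$. By Assumption H1 the first (``Lipschitz'') bracket is bounded by $K|\Delta X_s|$, while the second (``forcing'') bracket matches exactly one of the terms appearing on the right-hand side of the stated inequality. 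The same splitting is done for $\sigma$ and $c$.

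Next, after taking $\sup_{s\le t}$ and applying the elementary inequality $|a_1+\cdots+a_k|^p\le C\sum|a_i|^p$, I would invoke BDG on each martingale piece: for the Brownian term one gets $CE[(\int_0^t|\sigma\text{-increment}|^2 ds)^{p/2}]$; for the compensated Poisson term Proposition 2.6 gives $CE[(\int_0^t\!\int_E|c\text{-increment}|^2 N(ds,de))^{p/2}]$. The forcing contributions land in their final form as in the theorem; the Lipschitz contributions produce $CE[(\int_0^t K^2|\Delta X_s|^2 ds)^{p/2}]$ and $CE[(\int_0^t\!\int_E K^2|\Delta X_{s-}|^2 N(ds,de))^{p/2}]$. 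By Hölder the Lebesgue contribution is dominated by $C\int_0^t E[\sup_{r\le s}|\Delta X_r|^p]\,ds$, and the Poisson contribution can be reduced to the same form by compensation: since $|\Delta X_{s-}|^2$ is predictable in $(s,\omega)$ and independent of $e$, Proposition 2.4 gives $E[\int_0^t\!\int_E|\Delta X_{s-}|^2 N] = \lambda(E)\int_0^t E|\Delta X_s|^2 ds$. Applying Gronwall's inequality to $t\mapsto E[\sup_{s\le t}|\Delta X_s|^p]$ then yields the claimed bound.

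The main obstacle lies in controlling the Poisson Lipschitz term $E[(\int_0^t\!\int_E|\Delta X_{s-}|^2 N(ds,de))^{p/2}]$ by $\int_0^t E[\sup_{r\le s}|\Delta X_r|^p]\,ds$ when $p>2$, because a naive bound $(\int|\Delta X|^2 N)^{p/2}\le \sup|\Delta X|^p N([0,T]\times E)^{p/2}$ does not close the Gronwall loop. The clean way is a Kunita-type inequality: write $|\Delta X_{s-}|^2\cdot N = |\Delta X_{s-}|^2\cdot\tilde N + \lambda(E)|\Delta X_s|^2\,ds$, apply BDG once more to the compensated piece, and iterate if necessary so that only $\lambda$-integrals of $|\Delta X_s|^p$ remain; because $\lambda$ is finite this produces a finite constant depending on $p$, $T$, $\lambda(E)$ and the Lipschitz constant, and Gronwall concludes. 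The remainder of the argument is routine and parallels the fixed-point estimate used in Theorem \ref{eusde}.
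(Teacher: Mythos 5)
Your proposal is correct, and the overall skeleton (difference equation, Lipschitz/forcing splitting, BDG with the quadratic-variation identity $[H.\tilde N,H.\tilde N]_t=\int_0^t\int_EH^2N(ds,de)$, producing exactly the three forcing terms of the statement) coincides with the paper's. Where you genuinely diverge is in the one step you correctly identify as the crux: controlling $E\bigl[\bigl(\int_0^t\int_E|\Delta X_{s-}|^2N(ds,de)\bigr)^{p/2}\bigr]$ for $p>2$. You handle it by a Kunita-type iteration --- split off the compensator, apply BDG to the compensated piece, and repeat until the exponent on the $N$-integral drops to at most $1$, after which subadditivity of $x\mapsto x^{\alpha}$ ($\alpha\le 1$) over the jump sum and the compensation formula leave only $\lambda(E)\int_0^tE[\sup_{r\le s}|\Delta X_r|^p]\,ds$-type terms, so Gronwall closes on all of $[0,T]$ at once. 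The paper instead sets $A_t:=\int_0^t|\Delta X_{s-}|^2N(ds,E)$ and exploits that $A$ is a pure jump process whose jump times are jump times of $N$ with $N(\{s\},E)\in\{0,1\}$, so that $A_T^{p/2}=\int_0^T\bigl(|A_{s-}+H_s|^{p/2}-A_{s-}^{p/2}\bigr)N(ds,E)\le C\int_0^T\bigl(A_{s-}^{p/2}+H_s^{p/2}\bigr)N(ds,E)$; taking expectations via the compensator gives $E[A_T^{p/2}]\le CTE[A_T^{p/2}]+CTE[\sup|\Delta X|^p]$, which is absorbed for $T$ small and then concatenated over subintervals (no Gronwall). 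Your route buys a single-pass argument on the whole interval at the cost of bookkeeping finitely many BDG iterations and their compensator remainders; the paper's route avoids iterated BDG entirely but pays with the small-interval-and-patch step. One point you should make explicit either way: Gronwall (or the paper's subtraction of terms from both sides) needs the a priori finiteness of $E[\sup_{s\le t}|\Delta X_s|^p]$, which the paper secures by localizing with stopping times in a remark; add the same localization to your argument.
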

    \begin{proof}
    By a simple calculation, we have
    \begin{equation}\small\label{astep}
    \begin{aligned}
    &E\left[\sup_{0\le t\le T}|X^1_t-X_t^2|^p\right]\le C|x_0^1-x_0^2|^p+CT^pE\left[\sup_{0\le t\le T}|X^1_t-X_t^2|^p\right]\\&+CT^{p\over 2}E\left[\sup_{0\le t\le T}|X^1_t-X_t^2|^p\right]+CE\left[\left(\int_0^T|X^1_{t-}-X_{t-}^2|^2N(dt, E)\right)^{p\over 2}\right]\\&+CE\left[\left(\int_0^T|b^1(t, X^1_t)
    -b^2(t, X^1_t)|dt\right)^p\right]+CE\left[\left(\int_0^T|\sigma^1(t, X^1_t)
    -\sigma^2(t, X^1_t)|^2dt\right)^\frac{p}{2}\right]\\&+CE\left[\left(\int_0^T\int_E|c^1(t, X^1_{t-}, e)
    -c^2(t, X^1_{t-}, e)|^2N(dt, de)\right)^\frac{p}{2}\right].
    \end{aligned}
    \end{equation}
    Now we set $H_t:=|X^1_{t-}-X_{t-}^2|^2, A_t:=\int_0^tH_sN(ds, E)$, then $A_t$ is a pure jump process and so is $A_t^{p\over 2}$. Notice that the jump time of $A^{p\over 2}$ is also a jump time of $N$ and the jump size of $N$ is always equal to $1$. So we have
    \begin{equation*}\small
    \begin{aligned}
    A_T^{p\over 2}&=\sum_{s\le T}A_s^{p\over 2}-A_{s-}^{p\over 2}=\sum_{s\le T}\left(A_s^{p\over 2}-A_{s-}^{p\over 2}\right)I_{\{N(\{s\}, E)\neq 0\}}\\
    &=\sum_{s\le T}\left(|A_{s-}+H_s|^{p\over 2}-A_{s-}^{p\over 2}\right)N(\{s\}, E)\\
    &=\int_0^T|A_{s-}+H_s|^{p\over 2}-A_{s-}^{p\over 2}N(ds, E)\\
    &\le C\int_0^TA_{s-}^{p\over 2}+H_s^{p\over 2}N(ds, E).\\
    \end{aligned}
    \end{equation*}
    Since $A_{\cdot-}$ and $H$ are predictable, we have
    \begin{equation*}\small
    \begin{aligned}
    E\left[A_T^{p\over 2}\right]\le CE\left[\int_0^TA_s^{p\over 2}+H_s^{p\over 2}ds\right]
    \le CTE\left[A_T^{p\over 2}\right]+CTE\left[\sup_{0\le t\le T}|X^1_t-X_t^2|^p\right].
    \end{aligned}
    \end{equation*}
    So if we choose $T$ small enough that $CT<1$, then we have
    \[E\left[A_T^{p\over 2}\right]\le \frac{CT}{1-CT}E\left[\sup_{0\le t\le T}|X^1_t-X_t^2|^p\right].\]
    By the calculation of \cref{astep}, choose $T$ smaller if necessary, we have the estimate in small time duration by subtract $\left(T^p+T^{p\over 2}+\frac{CT}{1-CT}\right)E\left[\sup_{0\le t\le T}|X^1_t-X_t^2|^p\right]$ on both sides of \cref{astep}. For any $T$, we can split $T$ into small pieces and get the desired conclusion.
    \end{proof}
    \begin{remark}
    Without loss of generality, we  assume
    \begin{equation}\small\label{tempa}
    \begin{aligned}
    E\left[\sup_{0\le t\le T}|X^1_t-X_t^2|^p\right]<\infty
    \end{aligned}
    \end{equation}
    in the preceding proof. If not, we can introduce a sequence of stopping times that make \cref{tempa} true, then get the $L^p$ estimate with stopping time and take limits. So we can subtract that term on both sides of \cref{astep}.
    \end{remark}

     \bibliographystyle{siamplain}
    \bibliography{wenxian}
\end{document}